\numberwithin{equation}{section}
\newtheorem{theorem}{Theorem}[section]
\newtheorem{lemma}[theorem]{Lemma}
\newtheorem{corollary}[theorem]{Corollary}
\newtheorem{proposition}[theorem]{Proposition}
\theoremstyle{definition}
\newtheorem{remark}[theorem]{Remark}
\def\be{\begin{eqnarray}}
\def\ee{\end{eqnarray}}
\def\beal{\begin{aligned}}
\def\enal{\end{aligned}}
\def\R{\mathbb R}
\def\N{\mathbb N}
\newtheorem*{Pf}{Proof}
\newcommand\beqa[1]{ \begin{eqnarray} \label{#1}}
\newcommand{\eeqa}{ \end{eqnarray} }
\newcommand{\beqano}{ \begin{eqnarray*} }
\newcommand{\eeqano}{ \end{eqnarray*} }
\def\beal{\begin{aligned}}
\def\enal{\end{aligned}}
\newcommand \x {\xi}
\renewcommand \phi {\varphi}
\newcommand \rank {{\rm rank\ }}
\def\~ {\tilde}
\def\Bbb{\mathbb}
\def\R{\Bbb R}
\newcommand{\inj}{\operatorname{inj}}
\newcommand \codim {\operatorname{codim}}
\title{Kuznecov remainders and generic metrics}
\author{Vadim Kaloshin}
\address{Institute of Science and Technology Austria, Klosterneuburg, Lower Austria} 
\email{Vadim.Kaloshin@gmail.com} 
\author{Emmett L. Wyman}
\address{Department of Mathematics and Statistics, Binghamton University, Vestal NY}
\email{ewyman@binghamton.edu}
\author{Yakun Xi}
\address{School of Mathematical Sciences, Zhejiang University, Hangzhou 310027, PR China}
\email{yakunxi@zju.edu.cn}
\begin{document}

\begin{abstract}
We obtain improved remainder estimates in the Kuznecov sum formula for period integrals of Laplace eigenfunctions on a Riemannian manifold $M$. Building upon the two-term asymptotic expansion established in \cite{2term}, we prove that for a Baire-generic class of metrics, the oscillatory second term in the Kuznecov formula can be eliminated, yielding an improved remainder estimate. 

\end{abstract}

\maketitle

\section{Introduction}

\subsection{Background}

Let $(M,g)$ be a compact Riemannian manifold without boundary. The Laplace-Beltrami operator is given in local coordinates by
\[
    \Delta_g = |g|^{-1/2} \partial_i (g^{ij} |g|^{1/2} \partial_j \ \cdot \ ).
\]
$L^2(M)$ admits an orthonormal basis of Laplace-Beltrami eigenfunctions $e_j$, $j \in \N$ with
\[
    \Delta_g e_j = -\lambda_j^2 e_j.
\]
We are interested in the asymptotic nature of eigenfunctions as an eigenvalue $\lambda_j \to \infty$. The key principle at work is: \emph{Spectral asymptotics are determined by the structure and dynamical properties of the geodesic flow.} There are many examples of this principle, one fundamental example being the result of Duistermaat and Guillemin \cite{DG} (see also \cite{Ivrii}) in which they obtain a qualitative improvement to the remainder term in the Weyl law under the assumption that the closed orbits of the geodesic flow comprise a measure-zero subset of the cotangent bundle.

Other interesting quantities include the so-called \emph{period integrals}
\[
    \int_\Sigma e_j \, dV_\Sigma
\]
of eigenfunctions over some closed embedded submanifold $\Sigma \subset M$. In \cite{z92}, Zelditch obtains a Kuznecov sum formula of the form
\[
    N_\Sigma(\lambda) := \sum_{\lambda_j \leq \lambda} \left| \int_\Sigma e_j \, dV_\Sigma  \right|^2 = C_{\Sigma,M} \lambda^{\codim \Sigma} + O(\lambda^{\codim \Sigma - 1}),
\]
from which follow the bounds on the individual terms
\[
    \left| \int_\Sigma e_j \, dV_\Sigma \right| = O(\lambda^{\frac{\codim \Sigma - 1}{2}}).
\]
There was a great deal of activity around finding both little-$o$ and logarithmic improvements to the individual term bounds under various dynamical and geometric assumptions \cite{CSPer,Gauss,emmett1,emmett2,emmett3,emmett4,CGT,CanGal1,CanGal2,CanGal3,CanGal4}, but it was not until \cite{2term} that the remainder of Zelditch's asymptotic formula was improved. One of the difficulties is that the improvement of the remainder revealed a structured oscillating term of order $\lambda^{\codim \Sigma - 1}$, which needed to be characterized.

Let 
\[
  G^t: S^*M \longrightarrow S^*M
\]
denote the time‑$t$ homogeneous geodesic flow on the unit cotangent bundle $S^*M$.
For a closed, embedded submanifold $\Sigma\subset M$, we denote by $SN^*\Sigma$ the unit conormal subbundle of $\Sigma$.
 Here is an abridged version of the results of \cite{2term}. To start with, there are a few technical conditions on the geodesic flow, including the set of return times. We assume that
\begin{equation}
    \label{eq: def looping times}
    \mathcal T_0 = \{ t \neq 0 : G^t(SN^*\Sigma) \cap SN^*\Sigma \neq \emptyset \}
\end{equation}
is countable, and, say, the set of recurrent elements of $SN^*\Sigma$ is measure zero in $SN^*\Sigma$.
Then,
\begin{equation}
    \label{eq: 2term asymptotics}
    N_\Sigma(\lambda) = C_{\Sigma,M} \lambda^{\codim \Sigma} + Q(\lambda) \lambda^{\codim \Sigma - 1} + c_{\Sigma,M} + o(\lambda^{\codim \Sigma - 1})
\end{equation}
where $Q(\lambda)$ is a bounded, uniformly continuous function where
\[
    Q(\lambda) = \sum_{t \in \mathcal T_0} e^{-it\lambda}\  \frac{q(t)}{-it},
\]
where $q(t)$ is some integral over the subset
\begin{equation}
    \label{eq: looping direction set}
    \{ (x,\xi) \in SN^*\Sigma : G^t(x,\xi) \in SN^*\Sigma \} \subset SN^*\Sigma.
\end{equation}
One can use the precise description of $Q(\lambda)$ and $q(t)$ to show that $Q(\lambda)$ is nonzero for all distance spheres of sufficiently small radius.

As noted in \cite{2term}, the constant term $c_{\Sigma,M}$ is there to address a quirk in the proof for the codimension-$1$ case. This constant has been $0$ in all concrete examples that the authors have tried, so it might be possible to remove it in general. To be safe, we include it.

Theorem 1.4 of \cite{2term} is important for us here, so we state it for convenience.
\begin{theorem}\label{thm: 2term}
    If $\mathcal T_0$ is countable, and the set
    \begin{equation}\label{eq: measure-zero}
        \{(x,\xi) \in SN^*\Sigma : G^t(x,\xi) \in SN^*\Sigma \text{ for some } t > 0 \}
    \end{equation}
    is measure-zero in $SN^*\Sigma$, then
    \[
        N_\Sigma(\lambda) = C_{\Sigma,M} \lambda^{\codim \Sigma} + c_{\Sigma,M} + o(\lambda^{\codim \Sigma - 1}).
    \]
\end{theorem}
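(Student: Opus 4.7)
The plan is to derive the theorem as a direct consequence of the two-term asymptotic expansion \eqref{eq: 2term asymptotics} recalled from \cite{2term}. The key observation is that the stronger hypothesis \eqref{eq: measure-zero} forces every coefficient $q(t)$ in the series defining $Q(\lambda)$ to vanish, so that $Q(\lambda)\equiv 0$ and the oscillating second term is absorbed into the $o(\lambda^{\codim\Sigma-1})$ remainder, leaving only the Weyl-type main term and the constant $c_{\Sigma,M}$.

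First I would verify that the hypotheses of \eqref{eq: 2term asymptotics} are actually met. Countability of $\mathcal{T}_0$ is assumed. The measure-zero condition on recurrent points of $SN^*\Sigma$ follows from \eqref{eq: measure-zero}, since any recurrent element of $SN^*\Sigma$ must return to $SN^*\Sigma$ at some positive time and therefore belongs to the set \eqref{eq: measure-zero}. Hence \eqref{eq: 2term asymptotics} is available.

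Next I would unpack $q(t)$. According to the description from \cite{2term}, $q(t)$ is the integral of some density, absolutely continuous with respect to the natural surface measure on $SN^*\Sigma$, over the looping-direction set \eqref{eq: looping direction set}. For $t\in\mathcal{T}_0$ with $t>0$ this set is contained in \eqref{eq: measure-zero} and therefore has measure zero, forcing $q(t)=0$. For $t\in\mathcal{T}_0$ with $t<0$, the fiberwise antipodal symmetry $(x,\xi)\mapsto(x,-\xi)$ of $SN^*\Sigma$, combined with the time-reversal symmetry of the geodesic flow, carries the looping-direction set at time $t$ bijectively and measure-preservingly onto the looping-direction set at time $|t|>0$, which again has measure zero; hence $q(t)=0$ in this range as well. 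Since every summand of $Q(\lambda)$ vanishes, $Q(\lambda)\equiv 0$, and substituting back into \eqref{eq: 2term asymptotics} yields the stated asymptotic.

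The main obstacle is confirming that $q(t)$ really is the integral of an $L^1$ density against a measure absolutely continuous with respect to the natural surface measure on $SN^*\Sigma$; only under such a structural description does measure zero of the domain translate into $q(t)=0$. To verify this one must revisit the derivation of \eqref{eq: 2term asymptotics} in \cite{2term}, where $q(t)$ arises from a stationary-phase reduction applied to a wave-kernel restricted to $\Sigma$, and check that the resulting integrand is locally bounded near each $t\in\mathcal{T}_0$ while the reference measure is of Liouville type. Once this bookkeeping is in place, the theorem follows immediately.
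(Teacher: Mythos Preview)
The paper does not prove this theorem; it is Theorem~1.4 of \cite{2term}, restated here for convenience, so there is no in-paper proof to compare against. Your approach---deducing the result from the two-term expansion \eqref{eq: 2term asymptotics} by showing that the measure-zero hypothesis forces every coefficient $q(t)$ to vanish and hence $Q(\lambda)\equiv 0$---is the natural one and matches the paper's informal description of $q(t)$ as ``some integral over the subset'' \eqref{eq: looping direction set}. Your handling of the $t<0$ case via time reversal and the antipodal map is correct, and your identification of the one genuine obstacle (confirming that $q(t)$ is an integral of an $L^1$ density against a measure absolutely continuous with respect to surface measure on $SN^*\Sigma$) is accurate; settling that point requires consulting \cite{2term} directly, as the present paper gives no further details.
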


We will refer to the set \eqref{eq: measure-zero} as the set of \emph{looping directions} conormal to $\Sigma$.

\subsection{Statement of results}

The goal of this article is to show that the assumption of Theorem \ref{thm: 2term} is generic.
That is, given a smooth, compact manifold $M$ without boundary, and a closed embedded submanifold $\Sigma$, there is a topologically generic set of Riemannian metrics for which the hypotheses of Theorem \ref{thm: 2term} are satisfied. 

Let $\mathcal G$ denote the set of smooth Riemannian metrics on $M$ with the $C^\infty$ topology. This is a Fr\'echet space, and hence also completely metrizable. \emph{Residual} or \emph{comeager} sets in $\mathcal G$ will be considered generic. Recall that a set is said to be residual in $\mathcal G$ if it contains a countable intersection of open dense sets.

Given a Riemannian metric $g$ on $M$, we consider the symbol
\begin{equation}\label{eq: p_g}
    p_g(x,\xi) = \sum_{i,j} g^{ij}(x) \xi_i \xi_j.
\end{equation}
Note, $p_g$ is the principal symbol of the positive-definite Laplace-Beltrami operator $-\Delta_g$. The Hamilton vector field on the cotangent bundle $T^*M$ associated with the symbol $p_g$ is given by
\[
    H_{p_g} = \sum_j \left( \frac{\partial p_g}{\partial \xi_j} \frac{\partial}{\partial x_j} - \frac{\partial p_g}{\partial x_j} \frac{\partial}{\partial \xi_j} \right).
\]
The diffeomorphism $\exp(\frac 1 2 H_{p_g})$ on $T^*M$ is called the \emph{geometer's geodesic flow}. Our main result is as follows.

\begin{theorem}\label{thm: main}
    Let $M$ be a smooth, compact manifold without boundary, and let $\Sigma \subset M$ be a closed, embedded submanifold. Then there is a residual set of metrics $g$ in $\mathcal G$ for which
    \[
    \exp\bigl(\tfrac12 H_{p_g}\bigr)\colon \dot N^*\Sigma \to T^*M
    \]
    is transversal to $\dot N^*\Sigma$, where 
    $
    \dot N^*\Sigma = N^*\Sigma\setminus\{0\}
    $
    denotes the conormal bundle with the zero section removed.
\end{theorem}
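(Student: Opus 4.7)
The plan is to realize the stated property as transversality in a parametrized family and then invoke parametric transversality. Define
\[
\Phi\colon \mathcal G \times \dot N^*\Sigma \longrightarrow T^*M, \qquad \Phi(g, v) = \exp\bigl(\tfrac12 H_{p_g}\bigr)(v),
\]
and note that $\dot N^*\Sigma$ is a submanifold of $T^*M$ intrinsic to the embedding $\Sigma\hookrightarrow M$ and independent of $g$. If $\Phi$ is transversal to $\dot N^*\Sigma \subset T^*M$ then, by the parametric transversality theorem (in its $C^r$-Banach form, combined with a Baire argument to pass to $C^\infty$), the slice $\Phi(g,\cdot)$ is transversal to $\dot N^*\Sigma$ for $g$ in a residual subset of $\mathcal G$, which is the claim. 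I will show the stronger statement that $\Phi$ is a submersion at every point $(g_0, v_0)$ with $w_0 := \Phi(g_0, v_0) \in \dot N^*\Sigma$.

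\textbf{Variation formula.} For a tangent vector $h\in T_{g_0}\mathcal G$ (a smooth symmetric 2-tensor on $M$), the linearization of $p_g$ in the direction $h$ is a smooth function $q_h$ on $T^*M$ that is a quadratic form in $\xi$ whose coefficients encode the inverse-metric variation induced by $h$; in particular $h\mapsto q_h$ is a linear isomorphism onto the space of such quadratic forms. The classical variation-of-parameters formula for Hamiltonian flows gives
\[
\partial_g \Phi\big|_{(g_0, v_0)}(h) = \int_0^{1/2} d\exp\bigl((\tfrac12-t) H_{p_{g_0}}\bigr)\big|_{v(t)}\, H_{q_h}(v(t))\, dt,
\]
where $v(t) = \exp(tH_{p_{g_0}})(v_0)$ is the bicharacteristic through $v_0$. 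Because the Hamiltonian flow is symplectic, this vector equals $H_{Q_h}(w_0)$ where $Q_h(y,\eta) = \int_0^{1/2} q_h\bigl(\exp((t-\tfrac12)H_{p_{g_0}})(y,\eta)\bigr)\, dt$. Since the symplectic form on $T^*M$ is non-degenerate, surjectivity of $h\mapsto \partial_g \Phi(h)$ is equivalent to surjectivity of $h\mapsto dQ_h(w_0)$ onto $T^*_{w_0}T^*M$.

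\textbf{Submersion via local perturbations.} As $v_0$ is conormal to $\Sigma$, the projected geodesic $\gamma(t) = \pi(v(t))$ leaves $\Sigma$ transversely at $t = 0$, so one can pick $t_0\in (0,\tfrac12)$ with $\gamma(t_0)\notin \Sigma$ together with a neighborhood $U\ni \gamma(t_0)$ contained in $M\setminus \Sigma$ for which $\gamma|_{[0,1/2]}$ meets $U$ in a single arc. Restricting to $h$ supported in $U$, the integrand in $dQ_h(w_0)$ is supported in a short interval about $t_0$, and after the isomorphism $d\exp((t_0-\tfrac12)H_{p_{g_0}})\big|_{w_0}\colon T_{w_0}T^*M \to T_{v(t_0)}T^*M$ the surjectivity question reduces to: do the covectors $dq_h|_{v(t_0)}$ sweep out $T^*_{v(t_0)}T^*M$ as the $0$- and $1$-jets of $h$ at $x_0 := \gamma(t_0)$ vary? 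Writing $v(t_0) = (x_0,\xi_0)$ with $\xi_0\neq 0$ in local coordinates, the $d\xi_i$-components of $dq_h|_{v(t_0)}$ are linear in $h^{ij}(x_0)$ and realize every vector in $\R^n$ (using $\xi_0\neq 0$ and symmetry of $h$), while the $dx_k$-components are linear in $\partial_{x_k}h^{ij}(x_0)$ and likewise realize every vector; as the two jets can be prescribed independently, every covector is achieved.

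\textbf{Remaining technicalities.} The main obstacle is the interplay of localization with the Duhamel integral: one must exhibit $h$ compactly supported in $U$ with freely prescribed 1-jet at $x_0$, which is accomplished by setting $h(x) = \chi(x)\bigl(h_0 + A\cdot(x-x_0)\bigr)$ in local coordinates for a bump function $\chi$ equal to $1$ near $x_0$ and supported in $U$. A secondary technicality is that $\mathcal G$ is Fréchet rather than Banach: one exhausts $\dot N^*\Sigma$ by compacta $K_n$, notes that the transversality condition on each $K_n$ is open in $\mathcal G$, and invokes the Banach parametric transversality theorem on $C^r$-metrics for large enough $r$ to obtain density, intersecting the resulting countably many open dense conditions to produce the residual set.
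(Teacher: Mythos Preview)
The central gap is in your ``Submersion via local perturbations'' step: you assert one can choose $t_0\in(0,\tfrac12)$ and a neighborhood $U\ni\gamma(t_0)$ such that $\gamma|_{[0,1/2]}$ meets $U$ in a single arc. This fails precisely when the bicharacteristic through $v_0$ is periodic with period $T\le\tfrac12$, for then $\gamma$ traces a closed curve several times and every neighborhood of every point on it is visited repeatedly. In that case the Duhamel integral picks up a contribution from each pass, and (taking for simplicity $\tfrac12=kT$ and writing $P=d\phi_T|_{w_0}$ for the linearized period map) one computes
\[
\partial_g\Phi\big|_{(g_0,v_0)}(h)\ \in\ \operatorname{Im}\Bigl(\textstyle\sum_{j=0}^{k-1}P^{-j}\Bigr)
\qquad\text{for \emph{every} symmetric $2$-tensor $h$.}
\]
Whenever $P$ has a nontrivial $k$-th root of unity as an eigenvalue, this operator has a kernel, and one checks that neither the submersion nor even transversality to $\dot N^*\Sigma$ holds at such $(g_0,v_0)$. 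The paper isolates this obstruction explicitly (the ``football'' remark after the conformal-perturbation proposition), exhibiting a closed geodesic on which the effect of a localized perturbation cancels on the second pass.

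This is why the paper's proof is two-staged rather than a single parametric-transversality step. First the bumpy-metric theorem yields an open dense set of metrics with only finitely many closed geodesics of bounded length; then a family of \emph{diffeomorphism-induced} perturbations $g\mapsto\exp(X)^*g$---which remain in the isometry class of $g$ and amount to moving the embedding of $\Sigma$---is used to arrange that none of those finitely many closed geodesics meets $\Sigma$ conormally. Only on the resulting open dense set does every relevant $v_0$ have a non-closed orbit segment, so that a single-arc localization becomes available and a perturbation argument of your Duhamel type goes through. (A minor separate point: for $h$ supported away from the endpoints one always has $dp\bigl(\partial_g\Phi(h)\bigr)=q_h(v_0)-q_h(w_0)=0$, so $\partial_g\Phi$ alone lands in $\ker dp|_{w_0}$ and is never surjective; the missing direction comes from the radial variation in $v$, which the paper includes explicitly.)
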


    We use the conormal bundle $N^*\Sigma$ instead of the unit conormal bundle $SN^*\Sigma$ since the latter changes with the metric $g$ and the former does not.

As a corollary, we have:

\begin{corollary}\label{cor: main}
    Assume the hypotheses of Theorem \ref{thm: main}. Then, there is a residual set of metrics $g$ in $\mathcal G$ for which
    \[
        N_\Sigma(\lambda) = C_{\Sigma,M} \lambda^{\codim \Sigma} + c_{\Sigma,M} + o(\lambda^{\codim \Sigma - 1}).
    \]
\end{corollary}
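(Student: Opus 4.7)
The plan is to reduce Corollary~\ref{cor: main} to Theorem~\ref{thm: 2term} by showing that the single-time transversality conclusion of Theorem~\ref{thm: main} already forces both hypotheses of Theorem~\ref{thm: 2term}. The central observation I would exploit is that, because $p_g$ is homogeneous of degree $2$ in $\xi$, varying the magnitude of a cotangent vector along $\dot N^*\Sigma$ is equivalent to reparameterizing the time in the geodesic flow; so transversality at the single time $t = 1/2$ actually packages looping information at every time.

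I would start by fixing a metric $g$ in the residual set produced by Theorem~\ref{thm: main} and writing $\Psi_g := \exp(\tfrac{1}{2}H_{p_g})\colon \dot N^*\Sigma \to T^*M$. Transversality of $\Psi_g$ to $\dot N^*\Sigma$ and the preimage theorem force
\[
L_g := \{v \in \dot N^*\Sigma : \Psi_g(v) \in \dot N^*\Sigma\}
\]
to be a smooth submanifold of $\dot N^*\Sigma$ of codimension $\codim_{T^*M}\dot N^*\Sigma = \dim M$. Since $\dim \dot N^*\Sigma = \dim M$, the set $L_g$ is $0$-dimensional, hence discrete, and by second countability of $\dot N^*\Sigma$ at most countable.

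Next I would identify $L_g$ with the full set of looping directions and times. The polar diffeomorphism $\dot N^*\Sigma \cong SN^*\Sigma \times (0,\infty)$ sends $(x,\xi)$ to $((x,\xi/|\xi|_g),|\xi|_g)$, and a short ODE computation using the homogeneity of $p_g$ gives the scaling identity
\[
\exp(sH_{p_g})(x, c\xi_0) \;=\; \Phi_c\bigl(\exp(csH_{p_g})(x,\xi_0)\bigr), \qquad \Phi_c(y,\eta) := (y, c\eta),
\]
valid for all $s \in \R$ and $c>0$. Taking $s=1/2$ and restricting to unit conormals $\xi_0 \in SN^*\Sigma$, and using that $\exp(\tfrac{c}{2}H_{p_g})$ acts on unit covectors as the unit-speed geodesic flow $G^c$, one obtains $\Psi_g(x,c\xi_0)\in\dot N^*\Sigma$ iff $G^c(x,\xi_0)\in SN^*\Sigma$. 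Under the polar identification, $L_g$ therefore corresponds exactly to
\[
\bigl\{((x,\xi_0),c)\in SN^*\Sigma\times(0,\infty) : G^c(x,\xi_0)\in SN^*\Sigma\bigr\}.
\]

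To finish, this set is countable, so its projection onto $SN^*\Sigma$ (which is precisely the looping-direction set of \eqref{eq: measure-zero}) is countable and in particular of measure zero, while its projection onto $(0,\infty)$ is $\mathcal T_0 \cap (0,\infty)$, countable; since $\mathcal T_0$ is symmetric under $t \mapsto -t$, all of $\mathcal T_0$ is countable. Both hypotheses of Theorem~\ref{thm: 2term} are then satisfied and the corollary follows. The step I anticipate as the main obstacle is the identification of $L_g$ with the full looping set: one must recognize that Theorem~\ref{thm: main} is phrased over the full cone $\dot N^*\Sigma$ rather than the unit bundle $SN^*\Sigma$ precisely so that a condition at the single time $t=1/2$ encodes a condition on looping at every time $c>0$. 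Once that dictionary is in place, the remainder is routine dimension counting and a direct application of Theorem~\ref{thm: 2term}.
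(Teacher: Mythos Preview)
Your proposal is correct and follows essentially the same route as the paper: show that transversality plus the half-dimension count makes $L_g$ a discrete (hence countable) subset of $\dot N^*\Sigma$, then use homogeneity of $p_g$ to identify $L_g$ with the full set of looping data, and project to feed Theorem~\ref{thm: 2term}. You are in fact more explicit than the paper on two points---you spell out the scaling identity $\exp(sH_{p_g})(x,c\xi_0)=\Phi_c\bigl(\exp(csH_{p_g})(x,\xi_0)\bigr)$ that underlies the dictionary between the cone $\dot N^*\Sigma$ and $SN^*\Sigma\times(0,\infty)$, and you separately verify the countability of $\mathcal T_0$ (the paper only states the measure-zero conclusion for the looping directions, leaving the $\mathcal T_0$ hypothesis implicit)---but these are elaborations of the same argument, not a different one.
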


The proof is short, so we include it here.

\begin{proof}[Proof of Corollary \ref{cor: main}]
Consider a metric $g$ from the output of Theorem \ref{thm: main}. We will show that, for this metric, the set of looping directions \eqref{eq: measure-zero} is countable. The corollary will then follow from Theorem \ref{thm: 2term}.

We start by rewriting the set \eqref{eq: measure-zero} of looping directions as the image of the projection
\[
    \{ (x,\xi) \in \dot N^*\Sigma : \exp\bigl(\tfrac 1 2 H_{p_g}\bigr)(x,\xi) \in \dot N^*\Sigma \} \to SN^*\Sigma,
\]
and so it suffices to show the set on the left is countable. However, the set on the left is the intersection
\[
    \exp\bigl(\tfrac 1 2 H_{p_g}\bigr)(\dot N^*\Sigma) \cap \dot N^*\Sigma.
\]
For our metric, this intersection is transversal. Since $\dot N^*\Sigma$ and its image under the flow are both of half the dimension of the ambient space $T^*M$, the points of the intersection are isolated, which implies the claim.
\end{proof}

We make a few remarks about the main result. First, the submanifold $\Sigma$ must be fixed beforehand. That is, it is not possible to obtain the improvement in Corollary \ref{cor: main} for \emph{all} $\Sigma$ generically, as this formula will have a nontrivial $Q$ term for small distance spheres, as previously remarked. Second, while the main result of this paper, Corollary \ref{cor: main}, says something about spectral asymptotics, all arguments in this paper are geometric (or perhaps dynamical) in nature.

For other genericity results having to do with spectral analysis, see e.g. \cite{cggeneric} and \cite{echolocation}. In the former, Canzani and Galkowski obtain logarithmic improvements to the remainder of the Weyl law for a generic set of metrics. Their notion of a generic set is \emph{predominant} set, which is larger than a residual set. In \cite{echolocation}, the second and third authors show that one can hear at which point a generic drum is struck---amongst drums shaped like compact manifolds without boundary.

\begin{figure}[htbp]
  \centering
  \begin{subfigure}[b]{\textwidth}
    \centering
    \includegraphics[height=0.35\textheight,keepaspectratio]{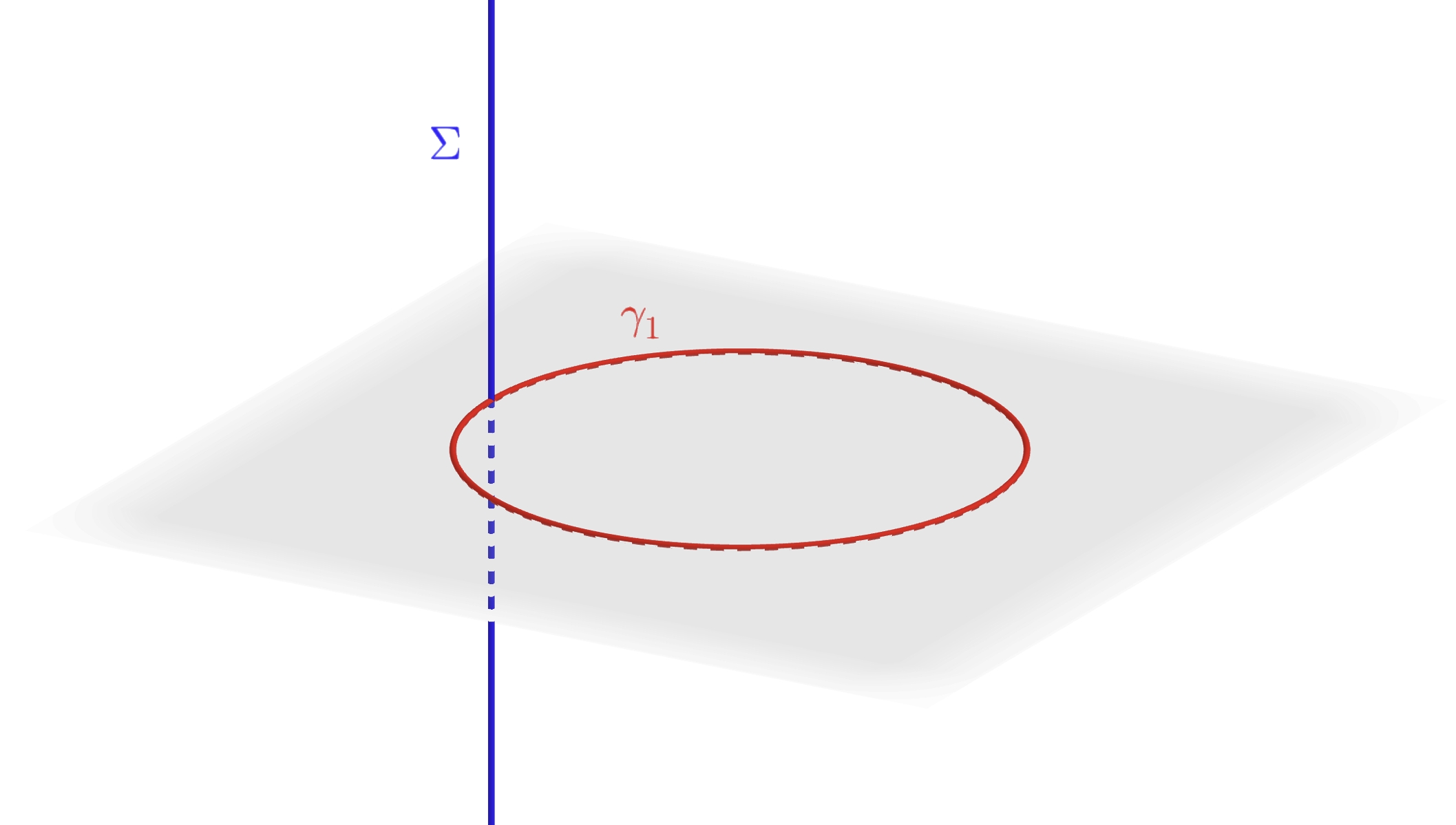}
    \caption{A closed geodesic normal to $\Sigma$.}
    \label{fig:gamma1}
  \end{subfigure}
  \vspace{1em}
  \begin{subfigure}[b]{\textwidth}
    \centering
    \includegraphics[height=0.38\textheight,keepaspectratio]{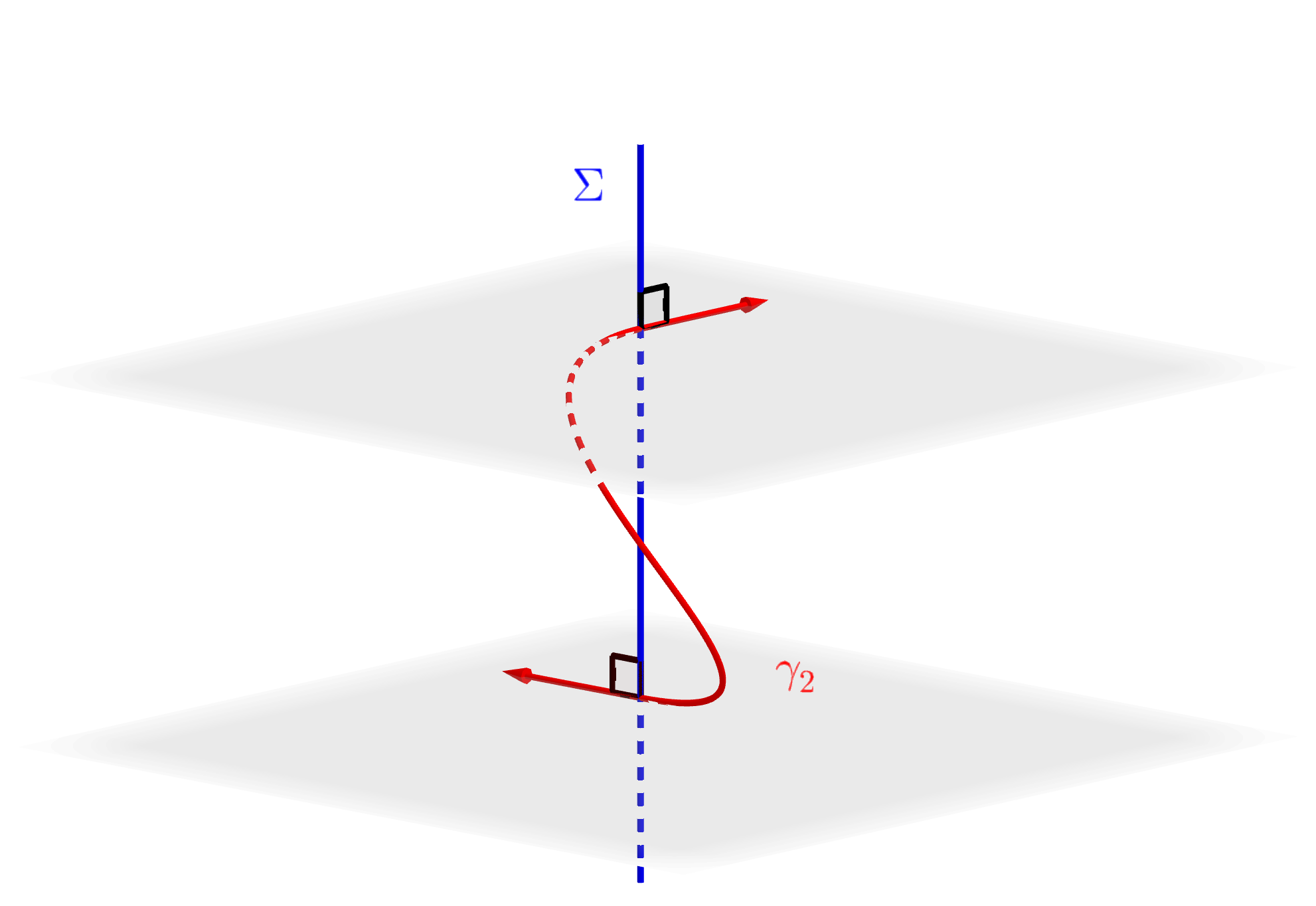}
    \caption{A looping geodesic of $\Sigma$ that is not closed.}
    \label{fig:gamma2}
  \end{subfigure}
  \caption{Two enemy scenarios for improving the Kuznecov remainder: in the top panel, $\gamma_1$ is a closed geodesic normal to $\Sigma$; in the bottom panel, $\gamma_2$ is a looping geodesic of $\Sigma$ that does not close.} 
  \label{fig: loop and closed}
\end{figure}

{As discussed in \cite{2term} and earlier works, two types of geodesic orbits play key roles in the analysis of Kuznecov sums. 
\begin{itemize}
\item \underline{Periodic geodesics} (top panel of Figure \ref{fig: loop and closed}) that meet the submanifold orthogonally give rise to recurrent conormal directions, contributing to individual terms in the sum;
    \item \underline{looping geodesics} (bottom panel of Figure \ref{fig: loop and closed}) return conormally to the submanifold at a later time, contributing to the Kuznecov remainder.
\end{itemize}

Since these phenomena have different geometric origins and encode distinct spectral information, it is natural to treat them separately. In particular, periodic geodesics normal to $\Sigma$ are more delicate to perturb—since they pass repeatedly through the same local neighborhood—whereas non-periodic looping geodesics can be handled more directly. Once both sources of large remainder terms are addressed independently, we obtain the improved Kuznecov remainder.

{Recall that a metric $g$ is \emph{bumpy} if all its closed geodesics are nondegenerate. By a classical theorem (see, e.g.,  \cite{anosov1983generic}), bumpy metrics are generic. More recently, Figalli and Rifford \cite{figalli2015closing1,figalli2015closing2} studied bumpy metrics in depth, obtaining them solely via conformal perturbations.
Observe that, under a bumpy metric there are only finitely many closed geodesics of uniformly bounded length. By perturbing the embedding of $\Sigma$ we can guarantee that none of those finitely many closed loops ever intersect it orthogonally.  (see Proposition \ref{prop: perturbation} and Figure \ref{fig: closed geodesic}). Once all periodic normals have been removed, the only remaining obstructions are non-periodic looping geodesics, which we eliminate via a second family of localized conformal perturbations that force the desired transversality.}

{We stress, however, that a bumpy metric by itself is insufficient to improve the Kuznecov remainder.
Indeed, as shown in \cite{2term}, if $\Sigma$ is a geodesic sphere then no metric (bumpy or not) admits an improvement. The transversality conditions and overall framework required by our proof are therefore of a different character than in the classical bumpy metric setting, necessitating the new techniques developed here.}

To implement our two‐step plan, we proceed as follows. We begin with a bumpy metric on $M$, which guarantees that every closed geodesic is nondegenerate. First, we construct diffeomorphism‐induced perturbations that modify the embedding of $\Sigma$, thereby ensuring that no periodic geodesic can meet $\Sigma$ orthogonally (see Figure \ref{fig: closed geodesic} in Section \ref{sec: no closed normal}). Once all periodic normals have been removed, only looping geodesics remain. To eliminate those, we introduce a class of localized conformal perturbations whose effect on the principal symbol forces the geodesic flow to be transverse to $\dot N^*\Sigma$. This transversality argument then yields the improved remainder estimate in the Kuznecov formula.

\subsection{Structure of the paper}

Section \ref{sec: bumpy metric} recalls several immediate consequences of the bumpy metric theorem and records some technical facts.  Section \ref{sec: perturbation} presents two distinct families of local metric perturbations, proves that each satisfies the required full-rank differential property, and constitutes the most technical part of this paper. Furthermore, Remark \ref{rmk: footballs} exhibits an example demonstrating that the conformal perturbation family alone is insufficient for our argument.
 In Section \ref{sec: no closed normal} we apply the first family to show that, for a residual set of metrics, there are no closed geodesics that intersect the fixed submanifold orthogonally.
Section \ref{sec: proof of main} employs the second family to establish transversality and thereby complete the proof of Theorem \ref{thm: main}.  Finally, Section \ref{sec: further problems} outlines several related problems and open questions suggested by our results.

\subsection*{Acknowledgements} Y. X. is supported by the National Key Research and Development Program of China No. 2022YFA1007200, Zhejiang Provincial Natural Science Foundation of China under
Grant No. LR23A010002, and NSF China Grant No.
12171424. V. K. is partially supported by ERC Grant \#885707. E. L. W. is partially supported by NSF grant DMS-2204397.

\section{Some quick consequences of the bumpy metric theorem}\label{sec: bumpy metric}

We need to make use of the following observation about the bumpy metric theorem, which is known to experts. In what follows, $\mathcal G$ will denote the Fr\'echet manifold of $C^\infty$ Riemannian metrics on our compact, boundary-less smooth manifold $M$.

\begin{proposition}\label{prop: bumpy metric}
    For each $T > 0$, there exists an open-dense subset of $\mathcal G$ for which there are only finitely many geodesics which close by time $T$.
\end{proposition}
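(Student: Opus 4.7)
The plan is to define
\[
    \mathcal U_T \;=\; \{\, g \in \mathcal G \;:\; g \text{ has only finitely many closed geodesics of period} \leq T, \text{ all nondegenerate}\,\},
\]
and show that $\mathcal U_T$ is both dense and open in $\mathcal G$.

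Density is immediate from the bumpy metric theorem \cite{anosov1983generic} already cited in the introduction: the set $\mathcal B$ of metrics for which every closed geodesic is nondegenerate is residual, hence dense, in $\mathcal G$. For $g \in \mathcal B$, each closed orbit of the geodesic flow is isolated as a one-dimensional subset of the compact space $S^*M$ (nondegeneracy, i.e.\ no non-trivial eigenvalue $1$ of the linearized Poincar\'e map, rules out continuous families of closed orbits through it), and the period of any closed geodesic is at least $2\inj(M,g) > 0$. Combining isolation of each orbit with compactness of $S^*M$ and closedness of the set of points on closed orbits of period $\leq T$ forces the total number of such orbits to be finite, so $\mathcal B \subset \mathcal U_T$.

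For openness, I fix $g \in \mathcal U_T$ with closed geodesics $\gamma_1,\ldots,\gamma_N$ of period $\leq T$ and check two claims for every $g'$ sufficiently $C^\infty$-close to $g$: (a) each $\gamma_i$ persists uniquely as a nondegenerate closed geodesic $\gamma_i(g')$ with period near that of $\gamma_i$, and (b) no new closed geodesic of period $\leq T$ appears. Claim (a) is a standard implicit function theorem argument applied to the displacement map $(x,\xi,t) \mapsto G^t_{g'}(x,\xi) - (x,\xi)$ on a Poincar\'e section transverse to $\gamma_i$, where nondegeneracy is exactly the invertibility hypothesis. Claim (b) I argue by contradiction: if $g_n \to g$ in $C^\infty$ and there exist closed orbits $(x_n,\xi_n) \in S^*_{g_n}M$ of periods $t_n \in (0,T]$ staying a fixed positive distance from every $\gamma_i(g_n)$, then after passing to a subsequence $(x_n,\xi_n) \to (x,\xi)$ and $t_n \to t \in [0,T]$. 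Continuous dependence of the Hamiltonian flow on the metric gives $G^t_g(x,\xi) = (x,\xi)$, and a uniform lower bound on the injectivity radius for metrics near $g$ gives $t > 0$. Thus $(x,\xi)$ lies on some $\gamma_i$, and the uniqueness part of (a) then forces $(x_n,\xi_n)$ to lie on $\gamma_i(g_n)$ for large $n$, contradicting separation.

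The main obstacle I anticipate is ensuring in step (b) that the limiting period $t$ is strictly positive, ruling out ``orbits collapsing to a point''. The cleanest way is to use that the injectivity radius depends continuously on $g$ (in, say, the $C^2$ topology), giving a uniform positive lower bound $t_n \geq 2\inj(M,g_n) \geq c > 0$ for all $g_n$ in a small $C^\infty$-neighborhood of $g$. Once that uniform lower bound is in hand, the remainder of the openness argument is a standard compactness plus implicit function theorem package, and combined with density via $\mathcal B$ we obtain the claimed open-dense subset.
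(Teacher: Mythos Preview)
Your proposal is correct and follows the same overall strategy as the paper: density from the bumpy metric theorem, openness from compactness of the space of closed orbits with period in a fixed compact interval. The packaging differs in two minor ways. First, the paper is cautious about regularity: it first proves the statement for $C^k$ metrics (where the classical Abraham/Anosov results apply directly) and then uses a short lemma of White to pass to the $C^\infty$ topology; you instead cite \cite{anosov1983generic} for the $C^\infty$ case outright and work in $\mathcal G$ from the start. Second, for openness the paper shows that the \emph{bad} set is closed by writing it as the image of a closed subset of $\mathcal G_r \times [r,T] \times S^*M$ under a proper projection (using lower semicontinuity of $\rank(I - d\Phi_g^t)$), whereas you argue openness of the \emph{good} set directly via persistence of nondegenerate orbits (IFT on a Poincar\'e section) plus a contradiction/compactness argument ruling out new orbits. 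Both arguments hinge on the same compactness --- that periods are bounded away from zero by a uniform injectivity-radius bound --- so this is a difference of style rather than substance. Your route is slightly more hands-on; the paper's is a one-shot topological argument that avoids explicitly invoking the implicit function theorem.
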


This follows from the classical result of \cite{abraham1970bumpy,anosov1983generic} with some minor adaptations. We will first show how this result holds for $C^k$ metrics and then use an argument in \cite{white2017bumpy} to transfer it to $C^\infty$ metrics.

Let $ \mathcal G^k$ denote the space of $C^k$ Riemannian metrics on $M$ with $k \in \N$ or $k = \infty$. For $r > 0$, let
\[
     \mathcal G_r^k = \{g \in  \mathcal G^k : \inj(M,g) > r \}
\]
denote the subset of $C^k$ metrics with injectivity radius larger than $r$. We observe that:
\begin{enumerate}
    \item $ \mathcal G_r^k$ is an open subset of $ \mathcal G^k$.
    \item Since $M$ is compact, $ \mathcal G^k = \bigcup_{r > 0}  \mathcal G_r^k$.
\end{enumerate}
It follows almost immediately that:

\begin{lemma}\label{lem: injectivity radius}
    Let $k \in \N$ or $k = \infty$. Suppose $U \subset  \mathcal G^k$ and that $U \cap  \mathcal G_r^k$ is open and dense in $ \mathcal G_r^k$ for each $r > 0$. Then $U$ is open and dense in $ \mathcal G^k$.
\end{lemma}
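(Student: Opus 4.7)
The plan is to exploit the two observations listed just before the lemma: each $\mathcal G_r^k$ is \emph{open} in $\mathcal G^k$, and since $M$ is compact every metric $g\in\mathcal G^k$ has strictly positive injectivity radius, so $g\in\mathcal G_r^k$ for some $r>0$. With these in hand, both openness and density of $U$ should follow from the corresponding statements relativized to each $\mathcal G_r^k$ by a one-line argument each, without any analytic input.

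For openness, I would take an arbitrary $g\in U$, pick $r>0$ so small that $g\in\mathcal G_r^k$ (possible by compactness of $M$), and observe that $U\cap\mathcal G_r^k$ is open in $\mathcal G_r^k$ by hypothesis. Because $\mathcal G_r^k$ is itself open in $\mathcal G^k$ (observation 1), $U\cap\mathcal G_r^k$ is open in $\mathcal G^k$, giving an open $\mathcal G^k$-neighborhood of $g$ contained in $U$. Since $g$ was arbitrary, $U$ is open in $\mathcal G^k$.

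For density, I would take any nonempty open $V\subset\mathcal G^k$ and any $g\in V$, again pick $r>0$ with $g\in\mathcal G_r^k$, and note that $V\cap\mathcal G_r^k$ is a nonempty open subset of $\mathcal G_r^k$ (open because $\mathcal G_r^k$ is open in $\mathcal G^k$). Density of $U\cap\mathcal G_r^k$ inside $\mathcal G_r^k$ then forces
\[
 (V\cap\mathcal G_r^k)\cap (U\cap\mathcal G_r^k)\neq\emptyset,
\]
and in particular $V\cap U\neq\emptyset$. Hence $U$ is dense in $\mathcal G^k$.

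There is really no main obstacle here — this is a purely point-set argument whose only two ingredients are compactness of $M$ (to ensure the family $\{\mathcal G_r^k\}_{r>0}$ exhausts $\mathcal G^k$) and the fact that each $\mathcal G_r^k$ is open in $\mathcal G^k$ (so that "open/dense in $\mathcal G_r^k$" upgrades to "open/dense in $\mathcal G^k$" on the subset $\mathcal G_r^k$). The lemma is stated as a convenience so that in later sections one can verify the bumpy-type conditions on each slab $\mathcal G_r^k$ separately, which is where the real work happens, and then glue automatically.
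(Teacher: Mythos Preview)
Your proposal is correct and is essentially the same argument as the paper's: the paper writes openness via the union $U=\bigcup_{r>0}(U\cap\mathcal G_r^k)$ rather than your pointwise phrasing, and the density argument is identical. No gap.
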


\begin{proof}
    We see $U$ is open in $ \mathcal G^k$ by writing
    \[
        U = \bigcup_{r > 0} U \cap  \mathcal G_r^k.
    \]
    We see $U$ is dense by considering a nonempty open subset $V \subset  \mathcal G^k$ and noting $V \cap  \mathcal G^k_r$ is nonempty for some $r > 0$. Since $U \cap  \mathcal G^k_r$ is dense in $ \mathcal G^k_r$, $U \cap V \cap  \mathcal G_r^k$ is nonempty, and hence so is $U \cap V$.
\end{proof}

{
Recall, a \emph{Jacobi field} $J$ along a geodesic is solves
\[
    \frac{D^2}{dt^2} J + R(J,\gamma')\gamma' = 0,
\]
where $R$ is the Riemann curvature tensor and $\frac{D^2}{dt^2}$ is the second covariant derivative with respect to the parameter $t$ of the geodesic. Jacobi fields are the vector fields generated by a variation of a geodesic.}

\begin{lemma}\label{lem: open-dense bumpy metric}
    The set of metrics in $ \mathcal G^k$ for which there exist no closed geodesics of period in $(0,T]$ which admit nontrivial smooth Jacobi fields is open and dense.
\end{lemma}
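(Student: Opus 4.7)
My plan is to prove this by reducing to the finite-injectivity-radius stratification of Lemma \ref{lem: injectivity radius}, establishing openness and density separately on each stratum, and then transferring from $C^k$ to $C^\infty$ by the argument of White \cite{white2017bumpy}.

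\textbf{Reduction.} By Lemma \ref{lem: injectivity radius}, it suffices to show that for every $r>0$, the set in question is open and dense in $\mathcal G_r^k$. Fix such an $r$. The injectivity radius bound gives a uniform positive lower bound $\delta=\delta(r)>0$ on the period of any closed geodesic of any $g\in\mathcal G_r^k$, so we only need to control closed geodesics of period in $[\delta,T]$, a compact interval.

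\textbf{Openness.} Suppose $g_0\in\mathcal G_r^k$ lies in the candidate set. A closed geodesic of period $t\in[\delta,T]$ for $g$ corresponds to a pair $(v,t)\in (SM)\times[\delta,T]$ with $\phi_g^t(v)=v$, where $\phi_g$ denotes the geodesic flow of $g$. Under the hypothesis on $g_0$, every such pair for $g_0$ is nondegenerate, meaning the linearized Poincar\'e map on the symplectic transversal to the flow does not have $1$ as an eigenvalue. By the implicit function theorem applied to $\phi_g^t(v)-v$ on the symplectic transversal, each such closed orbit is isolated (up to reparametrization), depends $C^1$-continuously on the metric, and its nondegeneracy persists under small $C^2$-perturbations of $g$. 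Compactness of $SM\times[\delta,T]$ forces the collection of such closed orbits to be finite, and finiteness plus persistence gives openness.

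\textbf{Density.} Work with $k$ large but finite. Consider the smooth map
\[
    \Psi\colon \mathcal G_r^k\times (SM)\times[\delta,T]\longrightarrow TM,\qquad \Psi(g,v,t)=\phi_g^t(v)-v,
\]
together with its symplectic differential along the transversal. Degenerate closed geodesics correspond to the intersection of $\Psi^{-1}(0)$ with the codimension-one subvariety where $1$ belongs to the spectrum of the linearized return map. The key transversality claim is that by varying $g$ alone, the full differential of $\Psi$ (together with the spectral condition) is surjective onto $TM$ and the codimension-one degeneracy direction at every candidate point; this is established by exhibiting compactly supported metric perturbations near a regular point of the putative geodesic that independently move the return point and the eigenvalue profile of the monodromy. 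Granting this, the parametric Sard-Smale theorem produces a residual, hence dense, set of $g\in\mathcal G_r^k$ for which no degenerate closed geodesic of period in $[\delta,T]$ exists. Combined with the openness of Step 2, this gives open-dense in $\mathcal G_r^k$.

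\textbf{From $C^k$ to $C^\infty$.} Openness in $\mathcal G^\infty$ follows from openness in $\mathcal G^k$ because the $C^\infty$ topology is finer. Density transfers via the argument of \cite{white2017bumpy}: given $g_0\in\mathcal G^\infty$ and a $C^\infty$-neighborhood $V$, choose a $C^k$-approximation in $V$ from the open-dense set produced above and then smooth it back into $\mathcal G^\infty\cap V$ while staying inside the (open) candidate set.

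\textbf{Main obstacle.} The principal technical point is the transversality claim in the density step, which amounts to saying that metric perturbations can independently move both the return position of a closed geodesic and its linearized monodromy in any prescribed direction in the symplectic group. Such perturbations must be constructed locally in a neighborhood of a regular point of the candidate orbit and must deform the curvature along the geodesic to cover all symplectic directions; this is classical (Abraham, Anosov) but delicate, and the conformal perturbations introduced later in Section \ref{sec: perturbation} of this paper are more than sufficient to carry it out.
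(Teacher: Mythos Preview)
Your proposal takes a substantially different and more laborious route than the paper, and contains a couple of genuine issues.

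\textbf{Density.} The paper does not re-prove the bumpy metric theorem; it simply cites \cite{abraham1970bumpy,anosov1983generic} to conclude that the set $U$ in question is residual, hence dense, in $\mathcal G^k$. Your Sard--Smale sketch is therefore unnecessary. More importantly, your claim that ``the conformal perturbations introduced later in Section~\ref{sec: perturbation} \dots are more than sufficient to carry it out'' is not right: Proposition~\ref{prop: conformal perturbation} explicitly requires the non-closed hypothesis \eqref{eq: non-closed condition}, so it cannot be invoked to perturb the monodromy of a \emph{closed} orbit. The transversality you need for the bumpy metric theorem is a different (and classical) computation that is not supplied in this paper.

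\textbf{Openness.} The paper's argument is purely topological: define the bad set
\[
E_r=\{(g,t,x,\xi)\in\mathcal G_r^k\times[r,T]\times S^*M:\Phi_g^t(x,\xi)=(x,\xi)\text{ and the orbit is degenerate}\},
\]
observe that $E_r$ is closed (continuity of the flow and lower semicontinuity of rank), and project it to $\mathcal G_r^k$ by a closed map (the fiber $[r,T]\times S^*M$ is compact). Your IFT/persistence argument can be made to work, but as written it has a gap: showing that the finitely many nondegenerate orbits of $g_0$ persist and remain nondegenerate does not by itself exclude \emph{new} degenerate closed orbits appearing nearby in $SM\times[\delta,T]$ for $g$ close to $g_0$. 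You need an additional compactness/limiting step (which is exactly what the paper's closed-set argument packages cleanly).

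\textbf{The $C^k\to C^\infty$ step.} This is not part of the present lemma. The statement is about $\mathcal G^k$; the transfer to $\mathcal G=\mathcal G^\infty$ is carried out in the \emph{next} lemma via White's observation. Including it here conflates the two.
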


\begin{proof}
    Let $U \subset  \mathcal G^k$ denote the set of metrics for which no closed geodesics of period in $(0,T]$ admit nontrivial Jacobi fields. By the classical bumpy metric theorem \cite{abraham1970bumpy,anosov1983generic}, $U$ is residual and hence dense in $ \mathcal G^k$. By the lemma, it suffices to show $U \cap  \mathcal G_r^k$ is open in $ \mathcal G_r^k$ for each $r > 0$.

    Let $E_r \subset  \mathcal G^k_r \times (0,T] \times S^*M$ denote the problematic set
    \[
        E_r = \{(g,t,x,\xi) : \Phi^t_g(x,\xi) = (x,\xi) \text{ and } \rank (I - d\Phi_g^t) > 1 \},
    \]
    where here $S^*M$ is the quotient of $T^*M \setminus 0$ by the action of $\R_+$ by scalar multiplication, where $\Phi^t_g$ denotes the time-$t$ unit-speed geodesic flow on $S^*M$. $U \cap  \mathcal G_r^k$ is the complement of the image of $E_r$ via the projection onto $ \mathcal G^k_r$. Since the injectivity radii of metrics in $ \mathcal G_r^k$ are greater than $r$, there are no closed geodesics of period $r$ (actually $2r$) or less. Hence
    \[
        E_r \subset  \mathcal G^k_r \times [r,T] \times S^*M.
    \]
    
    Provided the metrics have sufficiently many continuous derivatives, and since the rank function is lower semi-continuous, $E_r$ is a closed subset of $ \mathcal G_r^k \times [r,T] \times S^*M$.
    Let $\pi$ denote the projection $ \mathcal G_r^k \times [r,T] \times S^*M \to  \mathcal G_r^k$. Since $[r,T] \times S^*M$ is compact, $\pi$ is a closed map and hence $\pi E_r$ is closed in $ \mathcal G_r^k$, hence $U \cap  \mathcal G_r^k$ is open in $ \mathcal G_r^k$.
\end{proof}

This section's proposition follows from Lemma \ref{lem: open-dense bumpy metric} and the following observation of White \cite{white2017bumpy}. We include a short proof for the sake of completeness.

\begin{lemma}
    Let $U \subset  \mathcal G^k$ be such that $U \cap  \mathcal G^j$ is open and dense in $ \mathcal G^j$ for all $j \geq k$. Then, $U \cap \mathcal G$ is open and dense in $\mathcal G$.
\end{lemma}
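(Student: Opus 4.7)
The plan is to establish both openness and density of $U \cap \mathcal G$ in $\mathcal G$ directly from the hypothesis, with openness being essentially formal and density requiring a mollification step. For openness I would observe that the inclusion $\mathcal G \hookrightarrow \mathcal G^k$ is continuous because the $C^\infty$-topology is finer than the $C^k$-topology; since $U \cap \mathcal G^k$ is open in $\mathcal G^k$ by the hypothesis with $j = k$, and $U \subset \mathcal G^k$, its preimage $U \cap \mathcal G$ is open in $\mathcal G$.

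For density, I would proceed as follows. Given a nonempty $C^\infty$-open subset $V \subset \mathcal G$, pick $g_0 \in V$, and use the definition of the Fr\'echet topology on $\mathcal G$ to find $m \geq k$ and $\epsilon > 0$ with
\[
V_0 := \{g \in \mathcal G : \|g - g_0\|_{C^m} < \epsilon\} \subset V.
\]
Then I would enlarge $V_0$ to the $C^m$-open ball
\[
\tilde V_0 = \{g \in \mathcal G^m : \|g - g_0\|_{C^m} < \epsilon\}
\]
in $\mathcal G^m$. Applying the hypothesis with $j = m$, the density of $U \cap \mathcal G^m$ in $\mathcal G^m$ together with the openness and nonemptiness of $\tilde V_0$ yields some $g_1 \in U \cap \tilde V_0$. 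Next, using openness of $U$ in $\mathcal G^m$, I would fix $\delta > 0$ small enough that the $C^m$-ball of radius $\delta$ around $g_1$ lies inside $U \cap \tilde V_0$, and then mollify $g_1$ in a finite system of coordinate charts (with a fixed partition of unity on $M$) to produce a smooth symmetric $2$-tensor $\tilde g_1$ with $\|\tilde g_1 - g_1\|_{C^m} < \delta$. Since positive-definiteness of a section of $\mathrm{Sym}^2 T^*M$ on the compact manifold $M$ is a $C^0$-open condition, $\tilde g_1$ is still a genuine Riemannian metric; by construction $\tilde g_1 \in U \cap V \cap \mathcal G$, giving density.

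The main -- indeed only -- technical point is the mollification: producing a smooth, $C^m$-close approximation to a given $C^m$ Riemannian metric on a compact manifold. This is a classical consequence of the fact that convolution with a standard smooth mollifier approximates $C^m$ functions in $C^m$ norm locally, glued together using the partition of unity. No new ideas beyond this routine smoothing and the interplay between the $C^m$ and $C^\infty$ topologies are needed.
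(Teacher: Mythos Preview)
The proposal is correct and follows essentially the same approach as the paper: both use that basic $C^\infty$-neighborhoods come from $C^m$-balls for some finite $m$, intersect with $U$ inside $\mathcal G^m$, and then pass back to a smooth metric. The only cosmetic difference is that the paper invokes the density of $\mathcal G$ in $\mathcal G^j$ as a stated fact, whereas you explicitly carry out the mollification step to produce the smooth approximant; otherwise the arguments are parallel.
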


\begin{proof}
    We will use two facts:
    \begin{enumerate}
        \item The collection of all of the open sets of $ \mathcal G^j$ for $j \geq k$ form a basis for the $C^\infty$ topology on $\mathcal G$.
        \item $\mathcal G$ is dense in $ \mathcal G^j$ for each $j \geq k$.
    \end{enumerate}
    Clearly $U$ is open in $\mathcal G$, so it suffices to show it is dense. Suppose $V$ is any nonempty open neighborhood in $\mathcal G$, say containing an element $g$. By (1) there exists $j \geq k$ and an open neighborhood $V'$ of $g$ in $ \mathcal G^j$ with $V' \cap \mathcal G \subset V$. Now, $V' \cap U$ is open and nonempty in $ \mathcal G^j$ since $U \cap  \mathcal G^j$ is open and dense in $C^j$. By (2), there exists $g' \in \mathcal G$ with $g' \in V' \cap U$. In particular, $g' \in V \cap U$, as desired.
\end{proof}

\section{Geodesics and two useful perturbations of the metric}\label{sec: perturbation}
In this section, we introduce two distinct types of metric perturbations that will be employed in our proof. First, we construct a perturbation of the metric within the same isometry class as the original metric, which allows us to locally perturb the embedding of the submanifold.
\subsection{Perturbing the embedding of the submanifold} \label{sec: perturbation 1}
Fix a metric $g \in \mathcal G$. Let $\mathcal I(g)$ denote the set of all smooth Riemannian metrics in the same isometry class as $g$. As a consequence of the Arzel\`a-Ascoli theorem, $\mathcal I(g)$ is a closed subset of $\mathcal G$, and hence is completely metrizable in its own right. We can then consider residual subsets of $\mathcal I(g)$.

We will consider a class of perturbations generated by diffeomorphisms on $M$. The resulting metric perturbation will result in a metric that is in the same isometry class as the original. This might seem fruitless initially, but it allows us to effectively perturb the embedding of $\Sigma$ into $M$ while preserving the geometry of $M$. 

Before constructing our perturbations, consider a diffeomorphism $\kappa : M \to M$. $\kappa$ induces a canonical transformation $\tau : T^*M \to T^*M$ which can be written in canonical local coordinates as
\[
    \tau(x,\xi) = (\kappa(x), d\kappa(x)^{-t} \xi),
\]
where $d\kappa(x)^{-t}$ denote the inverse-transpose of the Jacobian matrix $d\kappa(x)$. 

\begin{proposition}
    Let $g$ be a Riemannian metric on $M$, let $\kappa : M \to M$ be a diffeomorphism and let $\tau : T^*M \to T^*M$ be the associated canonical transformation. If $g' = \kappa^* g$, then
    \[
        p_{g'} = p_g \circ \tau
    \]
    where $p_g$ and $p_{g'}$ are as in \eqref{eq: p_g}. It follows that $d\tau H_{p_{g'}} = H_{p_g}$, and that $\tau$ takes orbits of the geodesic flow on $(M,g')$ to orbits of the geodesic flow on $(M,g)$, and vice-versa.
\end{proposition}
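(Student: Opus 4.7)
The plan is to verify the two claimed identities by direct computation and then read off the consequence about geodesic orbits. The first identity is a linear-algebra statement relating the dual of a pullback metric to the dual of the original metric; the second will follow from the general naturality of Hamiltonian vector fields under symplectomorphisms, once one checks that $\tau$ itself is a symplectomorphism.

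First I would establish $p_{g'} = p_g\circ\tau$ in canonical local coordinates. The pullback formula reads $g'_{ij}(x) = \partial_i\kappa^k(x)\,\partial_j\kappa^l(x)\,g_{kl}(\kappa(x))$, which in matrix form becomes $g'(x) = d\kappa(x)^t\,g(\kappa(x))\,d\kappa(x)$. Inverting gives $(g')^{-1}(x) = d\kappa(x)^{-1}\,g(\kappa(x))^{-1}\,d\kappa(x)^{-t}$, and pairing with $\xi\otimes\xi$ together with the substitution $\eta := d\kappa(x)^{-t}\xi$ yields $p_{g'}(x,\xi) = \eta^t\,g(\kappa(x))^{-1}\,\eta = p_g(\kappa(x),\eta) = (p_g\circ\tau)(x,\xi)$, as desired.

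Next I would verify that $\tau$ preserves the tautological $1$-form $\theta = \xi_j\,dx^j$ on $T^*M$, and hence the canonical symplectic form $\omega = -d\theta$. A short calculation using $\pi\circ\tau = \kappa\circ\pi$, where $\pi: T^*M\to M$ is the base projection, shows $(\tau^*\theta)_{(x,\xi)}(v) = (d\kappa(x)^{-t}\xi)\bigl(d\kappa(x)\,d\pi(v)\bigr) = \xi(d\pi(v)) = \theta_{(x,\xi)}(v)$. With $\tau^*\omega = \omega$ in hand, the standard identity $H_{f\circ\tau} = (d\tau)^{-1}\,H_f\circ\tau$, valid for any smooth $f$ on $T^*M$ and any symplectomorphism $\tau$, applied to $f = p_g$ yields $d\tau\cdot H_{p_{g'}} = H_{p_g}\circ\tau$, which is precisely the shorthand $d\tau\,H_{p_{g'}} = H_{p_g}$ in the statement.

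The orbit statement is then immediate from this intertwining: if $\gamma(t)$ is an integral curve of $H_{p_{g'}}$ then $\tau\circ\gamma$ is an integral curve of $H_{p_g}$, and applying the same argument to $\tau^{-1}$ (itself a symplectomorphism) gives the converse inclusion, so $\tau$ bijectively identifies orbits of the two geodesic flows. I do not anticipate a genuine obstacle; the only points that require care are keeping the transposes and inverses straight in the matrix computation and invoking the symplectic naturality of the Hamiltonian vector field cleanly.
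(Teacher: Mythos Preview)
Your proposal is correct and follows essentially the same approach as the paper: the same coordinate computation for $p_{g'}=p_g\circ\tau$ (just run in the opposite direction), and the same symplectic naturality argument for $d\tau\,H_{p_{g'}}=H_{p_g}$. If anything, you are slightly more thorough in that you verify $\tau^*\theta=\theta$ explicitly, whereas the paper simply asserts that $d\tau$ is a linear symplectomorphism before carrying out the one-line nondegeneracy argument.
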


\begin{proof}
    We note
    \begin{align*}
        p_g(\tau(x,\xi)) &= p_g(\kappa(x), d\kappa(x)^{-t} \xi) \\
        &= \sum_{i,j} g^{ij}(\kappa(x)) (d\kappa(x)^{-t} \xi)_i (d\kappa(x)^{-t} \xi)_j \\
        &= \sum_{i,j,k,\ell} g^{ij}(\kappa(x)) [d\kappa(x)^{-t}]_{i,k} [d\kappa(x)^{-t}]_{j,\ell} \xi_k \xi_\ell \\
        &= \sum_{k,\ell} \left[ d\kappa(x)^t g(\kappa(x)) d\kappa(x) \right]^{-1}_{k,\ell} \xi_k \xi_\ell \\
        &= \sum_{k,\ell} {g'}^{k\ell}(x) \xi_k \xi_\ell \\
        &= p_{g'}(x,\xi).
    \end{align*}
    Since $d\tau$ is a linear symplectomorphism $T_{(x,\xi)} T^*M \to T_{\tau(x,\xi)} T^*M$, we have, for each vector $v \in T_{(x,\xi)} T^*M$, and where $\sigma$ denotes the symplectic form on $T^*M$,
    \[
        \sigma(H_{p_{g'}}, v) = dp_{g'}(v) = dp_g (d\tau(v)) = \sigma(H_{p_g}, d\tau(v)) = \sigma(d\tau^{-1} H_{p_g}, v),
    \]
    from which the second part of the proposition follows. The third part follows quickly.
\end{proof}

We construct our perturbations as follows. Fix a smooth vector field $X$ on $M$ and consider the pullback
\[
    g_X = \exp(X)^*g
\]
of $g$ through the time-$1$ flow $\exp(X)$. Note, $\exp(X) : (M,g_X) \to (M,g)$ is an isometry by construction, and hence $g_X \in \mathcal I(g)$. Hence, to study $\Sigma$ in $(M,g_X)$, it suffices to study $\exp(X)(\Sigma)$ in the original space $(M,g)$. Since we will be studying the conormal bundle, we will also need to consider the induced canonical transformation
\[
    \tau_X(x,\xi) = (\exp(X)(x), (d\exp(X)(x))^{-t}\xi).
\]
We illustrate this situation with the commutative diagrams
\begin{equation}\label{eq: commutative diagrams}
    \begin{tikzcd}
        & \Sigma \arrow[dl] \arrow[rd] \\
        (M,g_X) \arrow[rr,"\exp(X)"] & & (M,g)
    \end{tikzcd}
    \qquad
    \begin{tikzcd}
        T^*M \arrow[r,"\tau_X"] \arrow[d] & T^*M \arrow[d] \\
        M \arrow[r,"\exp(X)"] & M
    \end{tikzcd}
\end{equation}
where the arrow $\Sigma \to (M,g)$ is not the embedding we started with.
By the proposition above, $\tau_X$ preserves the orbits of the geodesic flow between $(M,g_X)$ and $(M,g)$.

\begin{proposition} \label{prop: perturbation}
    Fix a metric $g \in \mathcal G$. About each point in $M$, there exists an open neighborhood $U \subset M$ and a smooth parametrization of vector fields
    \[
        F : \R^{n + n^2} \to \mathcal X(M)
    \]
    on $M$ for which $F(0) = 0$ and, for each $(x,\xi) \in \dot T^*U$
    \[
        \tau_{F}(x,\xi) : \R^{n + n^2} \to T^*M
    \]
    has full-rank differential for $(a,b)$ on a neighborhood of the origin.
\end{proposition}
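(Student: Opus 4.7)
The plan is to work in a coordinate chart around $p$ and parametrize vector fields by their value and linear Taylor coefficient at $p$, suitably cut off. Fix a coordinate chart identifying a neighborhood $V$ of $p$ with an open subset of $\R^n$ sending $p \mapsto 0$, choose an open neighborhood $U \Subset V$ of $p$, and a cutoff $\chi \in C_c^\infty(V)$ with $\chi \equiv 1$ on $\overline U$. For parameters $(a,b) \in \R^n \times \R^{n\times n} \cong \R^{n+n^2}$, set
\[
    F(a,b)(x) := \chi(x)\bigl(a + bx\bigr)
\]
in coordinates (extended by zero outside $V$), so $F$ is smooth with $F(0,0) = 0$.

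The core step is computing the differential of $\tau_{F(a,b)}(x,\xi)$ in $(a,b)$ at the origin. To this end I would first establish a general linearization identity: for any smooth one-parameter family $X_s$ of vector fields with $X_0 \equiv 0$, differentiating the flow and Jacobian ODEs at $s=0$ and using $dX_0 = 0$ gives
\[
    \tfrac{d}{ds}\bigg|_{s=0}\exp(X_s)(x) = \dot X_0(x), \quad \tfrac{d}{ds}\bigg|_{s=0}d\exp(X_s)(x) = d\dot X_0(x),
\]
where $\dot X_0 := \partial_s X_s|_{s=0}$, and therefore
\[
    \tfrac{d}{ds}\bigg|_{s=0}\tau_{X_s}(x,\xi) = \bigl(\dot X_0(x),\, -d\dot X_0(x)^t \xi\bigr).
\]
Applied to our family at $(a,b) = 0$, for any $(x,\xi) \in \dot T^*U$ where $\chi \equiv 1$ and $d\chi = 0$, this yields
\[
    \partial_{a_k}\tau_F(x,\xi)\big|_0 = (e_k,\, 0), \qquad \partial_{b_{ij}}\tau_F(x,\xi)\big|_0 = \bigl(x_j e_i,\, -\xi_i e_j\bigr).
\]

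To conclude, I would observe that these $n+n^2$ vectors span $T_{(x,\xi)}T^*M \cong \R^{2n}$: the $a$-partials span the subspace $\R^n \oplus 0$, and modulo these the $b$-partials reduce to $(0, -\xi_i e_j)$. Since $\xi \neq 0$, some coordinate $\xi_{i_0}$ is nonzero, so the vectors $-\xi_{i_0} e_j$ for $j = 1, \ldots, n$ span the complementary $0 \oplus \R^n$. Hence the differential at $(a,b) = 0$ is surjective; because surjectivity is an open condition on matrix entries and the differential depends smoothly on $(a,b)$, it persists on a neighborhood of $0$ in $\R^{n+n^2}$. The only delicate step is the linearization identity above---its content is that when $X_0 \equiv 0$ the time-$1$ flow equals $x \mapsto x + s\dot X_0(x)$ to first order in $s$, because both the flow ODE $\dot\gamma = X_s(\gamma)$ and its variational (Jacobian) equation collapse at $X_0 = 0$.
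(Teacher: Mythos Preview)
Your proof is correct and follows essentially the same approach as the paper: both define the same cut-off affine family $F(a,b)(x)=\chi(x)(a+bx)$, compute the differential of $(a,b)\mapsto\tau_{F(a,b)}(x,\xi)$ at the origin, and verify it surjects onto $T_{(x,\xi)}T^*M$ using $\xi\neq 0$. The only difference is in how the derivative is obtained---the paper solves the linear ODE $\dot X=A+BX$ explicitly to get $\tau_{F(a,b)}(x,\xi)=(e^Bx+T(B)A,\,e^{-B^t}\xi)$ and then differentiates, whereas you bypass the explicit solution via the variational identity $\tfrac{d}{ds}\big|_{0}\tau_{X_s}(x,\xi)=(\dot X_0(x),-d\dot X_0(x)^t\xi)$ valid whenever $X_0\equiv 0$; both routes yield the same Jacobian block structure (your $(0,-\xi_i e_j)$ versus the paper's $(0,-\xi_j e_i)$ is just a difference in how the matrix $b$ is indexed).
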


\begin{proof}
    Fix $(x_0, \xi_0) \in \dot T^*M$ and consider local coordinates about $x_0$. Let $\chi$ be a smooth cutoff function on $M$ which vanishes outside the local coordinate patch and is identically $1$ on an open neighborhood $U$ of $x_0$. Given $a \in \R^n$ and $b \in \R^{n^2}$, we define
    \[
        F(a,b) = \chi(x) \sum_{j = 1}^n \left(a_j + \sum_{i = 1}^n x_i b_{ij} \right) \frac{\partial}{\partial x_j}.
    \]
   
For $x$ in a neighborhood of $x_0$,
$$
F(a,b)(x) 
=
\sum_{j=1}^n
\bigl(
  a_j 
  + 
  \sum_{i=1}^n b_{ij}\,x_i
\bigr)
\,\frac{\partial}{\partial x_j},
$$
where $a=(a_1,\dots,a_n)\in\mathbb{R}^n$ and $b=(b_{ij})\in\mathbb{R}^{n^2}.$ 
The ODE for $x(t)\in\mathbb{R}^n$ is
$$
\frac{d}{dt}x_j(t)
=
a_j +\sum_{i=1}^n b_{ij}\,x_i(t),
\quad
x(0)=x.
$$

Let $X(t)=(x_1(t),\dots,x_n(t))^T$, $A=(a_1,\dots,a_n)^T$, and let $B$ be the $n\times n$ matrix with $(B)_{j,i}=b_{ij}$. Then
\[
\frac{d}{dt}X(t) = A + B\,X(t).
\]
Its general solution is
\[
X(t)
=
e^{tB}\,x
+
\biggl(\int_0^t e^{(t-s)B}\,ds\biggr)\,A.
\]
At time $1$,
\[
X(1)
=
e^{B}\,x
+
\bigl(\int_0^1 e^{(1-s)B}\,ds\bigr)\,A
=:
e^{B}\,x + T(B)\,A.
\]

\medskip

Hence the basepoint of the flow is
\[
y = \exp(F(a,b))(x)
=
e^{B}\,x + T(B)\,A.
\]
The derivative w.r.t.\ $x$ is
\[
d\,\exp(F(a,b))(x) = e^B,
\quad
\Longrightarrow
\quad
(d\,\exp(F(a,b))(x))^{-t} = e^{-\,B^t}.
\]
Therefore,
\[
\eta 
=
\bigl(d\,\exp(F(a,b))(x)\bigr)^{-t}\,\xi
=
e^{-\,B^t}\,\xi,
\]
and the map on phase space is
\[
\tau_{F(a,b)}(x,\xi)
=
\bigl(e^B x + T(B)\,A,e^{-B^t}\,\xi\bigr).
\]

We evaluate the derivative of the map $ \tau_{F(a,b)}(x,\xi) $ at the point $ ((a,b),(x,\xi)) = ((0,0),(x,\xi)) $. The derivative is expressed in the usual block matrix form as follows:

\[
d_{(a,b)} \tau_{F(a,b)}(x,\xi) \bigg|_{\substack{(a,b) = (0,0) \\ (x,\xi) \text{ fixed}}} 
= 
\begin{pmatrix}
\left. \dfrac{\partial y}{\partial a} \right|_{(0,0)} & \left. \dfrac{\partial y}{\partial b} \right|_{(0,0)} \\
\left. \dfrac{\partial \eta}{\partial a} \right|_{(0,0)} & \left. \dfrac{\partial \eta}{\partial b} \right|_{(0,0)}
\end{pmatrix},
\]
where $ y = \exp(F(a,b))(x) $ and $ \eta = \left( d\,\exp(F(a,b))(x) \right)^{-t} \xi $.

    At $ (a,b) = (0,0) $, the map $ y $ depends linearly on $ a $ with a coefficient matrix equal to the identity matrix. Therefore,
    \[
    \left. \dfrac{\partial y}{\partial a} \right|_{(0,0)} = I_n.
    \]
    The component $ \eta $ does not depend on $ a $ at first order, hence 
    \[
    \left. \dfrac{\partial \eta}{\partial a} \right|_{(0,0)} = 0_{n \times n}.
    \]

It then suffices to show that the lower-right block of $d_{(a,b)}\tau_{F(a,b)}$ has full-rank.
At $(a,b)=(0,0)$, we have $B^t=0$, so
\[
\left.\frac{\partial \eta}{\partial b_{ij}}\right|_{(0,0)}
= -\,E_{i,j}\,\xi
= -\,\xi_j\,\mathbf{e}_i.
\]
In matrix form, the $(i,(k,\ell))$-entry of the $n\times n^2$ block is
\[
\begin{cases}
-\xi_\ell, & \text{if } i=k,\\
0,         & \text{otherwise}.
\end{cases}
\]
Hence the lower-right block is
\[
\begin{pmatrix}
-\xi_1 & 0 & \cdots & 0
 & -\xi_2 & 0 & \cdots & 0
 & \cdots
 & -\xi_n & 0 & \cdots & 0 \\[6pt]
0 & -\xi_1 & \cdots & 0
 & 0 & -\xi_2 & \cdots & 0
 & \cdots
 & 0 & -\xi_n & \cdots & 0 \\[6pt]
\vdots & & \ddots & 
 & & & & & 
 & & & & \\[6pt]
0 & 0 & \cdots & -\xi_1
 & 0 & 0 & \cdots & -\xi_2
 & \cdots
 & 0 & 0 & \cdots & -\xi_n
\end{pmatrix},
\]
which clearly has full rank as long as $\xi\neq 0.$ 
\end{proof}
{

\subsection{A Conformal Perturbation} 

{
We now construct a conformal perturbation of the metric that will allow us to make any small prescribed perturbation of the terminal data of integral curves of $\frac 1 2 H_{p}$ in $T^*M$, provided these integral curves do not close.

We first work locally. Consider a bounded, open, convex neighborhood of the unit line segment $[0,1] \times \{0\}$ in $\R^n$, and on it a Riemannian metric $g$ for which
\begin{equation}\label{eq: fermi conditions}
	g_{ij}(x) = \begin{bmatrix}
		1 + O(|x'|^2) & 0 \\
		0 & I + O(|x'|^2)
	\end{bmatrix}
	\qquad \text{ where } x = (x_1, x').
\end{equation}
One should think of the metric as having been obtained from Fermi coordinates about a geodesic segment $[0,1] \times \{0\}$. We consider a scaling
\[
	g(\epsilon;x) = g(\epsilon(x - e_1) + e_1)
\]
of the metric, where $e_1 = (1,0,\ldots,0) \in \R^n$, where $0 \leq \epsilon \leq 1$. Note, the scaled metric also satisfies the condition \eqref{eq: fermi conditions}, except that a factor of $\epsilon^2$ precedes the errors. From now on, we will implicitly include this scaling into the metric, even if its inclusion is not indicated by the notation.

Let $(x,\xi)$ be the corresponding canonical local coordinates for the cotangent bundle over our chart, and 
consider the symbol
\[
	p_g(x,\xi) = g^{ij}(x) \xi_i \xi_j
\]
and the associated Hamiltonian vector field
\[
	\frac 1 2 H_{p_g} = \frac 1 2 \left( \frac{\partial p}{\partial \xi_i} \frac{\partial}{\partial x^i} - \frac{\partial p}{\partial x^i} \frac{\partial}{\partial \xi_i} \right).
\]
Note, the curve $(x(t), \xi(t)) = (te_1, e_1)$ is the unique integral curve starting at $(0, e_1)$. 

We are interested in studying the effect of conformal perturbations of the metric---and hence of the Hamiltonian vector field---on the integral curve with the same initial data. We start by taking our choice of open set $U \subset \R^n$ for which
\[
	U \cap ([0,1] \times \{0\}) = (0,1) \times \{0\}.
\]
Then, we take a smooth function $f$ supported in $U$ and consider the associated conformal perturbation
\[
	g(\epsilon,s;x) = \frac{1}{1 + sf(x)} g(\epsilon;x) \qquad \text{and} \qquad p_{g(\epsilon,s)}(x,\xi) = (1 + sf(x)) p_g(x,\xi).
\]
For each $s$, let $(x(s;t), \xi(s;t))$ be the integral curve for $\frac 1 2 H_{(1 + sf)p}$ with $(x(s,0), \xi(s,0)) = (0,e_1)$. 
We wish to describe
\[
	\left. \frac \partial {\partial s_j} \right|_{s = 0} (x(s,t), \xi(s,t)).
\]
To do so, we write down Hamilton's equations
\begin{align*}
	\dot x^i &= \frac 1 2 (1 + sf) \frac{\partial p}{\partial \xi_i} \\
	\dot \xi_i &= - \frac 1 2 (1 + sf) \frac{\partial p}{\partial x^i} - \frac 1 2 p s \frac{\partial f}{\partial x^i},
\end{align*}
from which we obtain, at $s = 0$,
\begin{equation*}
\begin{split}
	\frac{\partial}{\partial t} \frac{\partial x^i}{\partial s} &= \frac 1 2 f \frac {\partial p} {\partial \xi_i} + \frac 1 2 \left( \frac {\partial^2 p} {\partial x^j \partial \xi_i} \frac{\partial x^j}{\partial s} + \frac {\partial^2 p} {\partial \xi_j \partial \xi_i} \frac{\partial \xi_j}{\partial s} \right) 
	\\
	\frac{\partial}{\partial t} \frac{\partial \xi_i}{\partial s} &= - \frac 1 2 f \frac{\partial p}{\partial x^i} - \frac 1 2 \left( \frac{\partial^2 p}{\partial x^j \partial x^i} \frac{\partial x^j}{\partial s} + \frac{\partial^2 p}{\partial \xi_j \partial x^i} \frac{\partial \xi_j}{\partial s} \right) - \frac 1 2 p \frac{\partial f}{\partial x^i}
\end{split}
\end{equation*}
which simplifies, by repeated application of \eqref{eq: fermi conditions} and by noting $p = 1$, to
\begin{equation} \label{eq: perturbed hamilton equations}
	\begin{split}
		\frac{\partial}{\partial t} \frac{\partial x^i}{\partial s} &= f \xi_i + \frac{\partial \xi_i}{\partial s} \\
		\frac{\partial}{\partial t} \frac{\partial \xi_i}{\partial s} &= - \frac 1 2 \frac{\partial^2 p}{\partial x^j \partial x^i} \frac{\partial x^j}{\partial s} - \frac 1 2 \frac{\partial f}{\partial x^i}
	\end{split}
\end{equation}
with initial data
\begin{equation}\label{eq: perturbed initial data}
	\frac{\partial x}{\partial s} = \frac{\partial \xi}{\partial s} = 0 \qquad \text{ at $t = 0$.}
\end{equation}

We consider two kinds of functions $f$. The first adjusts the distance between the ends of the geodesic.

\begin{lemma} Taking everything as above, let $f$ be a smooth function supported on the coordinate patch but not at $0$ nor $t_0 e_1$, and
\[
	\frac{\partial f}{\partial x^i}(te_1) = 0 \qquad \text{ for each $t \in [0,t_0]$ and $i \geq 2$.}
\]
Then,
\begin{align*}
	\frac{\partial x}{\partial s} &= \frac 1 2 \left( \int_0^{t} f(ue_1) \, du \right) e_1 \\
	\frac{\partial \xi}{\partial s} &= -\frac 1 2 \frac{\partial f}{\partial x^1}(te_1) e_1.
\end{align*}
\end{lemma}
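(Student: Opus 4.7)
The plan is to integrate the linearized system \eqref{eq: perturbed hamilton equations} with initial data \eqref{eq: perturbed initial data} directly, using that the reference curve $x(t) = te_1$, $\xi(t) = e_1$ lives along the privileged axis and that the Fermi-coordinate conditions \eqref{eq: fermi conditions} force most of the coefficient functions to vanish there. My first step would be to tabulate those coefficients. Writing $p_g(x,\xi) = g^{ij}(x)\xi_i\xi_j$ and using the block form of $g^{ij}$ together with $g^{11}(x) = 1 + O(|x'|^2)$, one verifies that at $(te_1, e_1)$ one has $p = 1$, $\partial p/\partial \xi_i = 2\delta_{i1}$, the mixed derivatives $\partial^2 p/\partial x^j \partial \xi_i$ all vanish, $\partial^2 p/\partial \xi_i \partial \xi_j = 2\delta_{ij}$, and every entry of the Hessian $\partial^2 p/\partial x^i \partial x^j$ with $i = 1$ or $j = 1$ vanishes at $x' = 0$.

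Next I would propose the ansatz
\[
    \frac{\partial x}{\partial s}(t) = A(t)\,e_1, \qquad \frac{\partial \xi}{\partial s}(t) = B(t)\,e_1,
\]
and verify consistency with \eqref{eq: perturbed hamilton equations}. For $i \geq 2$, both equations reduce to $0 = 0$: the $\dot x$-equation because $\xi_i = 0$ and $B_i = 0$ in the ansatz; the $\dot \xi$-equation because the only surviving Hessian contraction is $(\partial^2 g^{11}/\partial x^1 \partial x^i)(te_1)\,A$, which vanishes by the Fermi condition, while the forcing $(\partial f/\partial x^i)(te_1)$ vanishes by hypothesis.

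The ansatz therefore closes up to the pair of scalar ODEs obtained from the $i = 1$ components,
\[
    \dot A(t) = f(te_1) + B(t), \qquad \dot B(t) = -\tfrac 1 2 \tfrac{\partial f}{\partial x^1}(te_1),
\]
with $A(0) = B(0) = 0$. I would finish by integrating directly: the second equation gives $B(t) = -\tfrac 1 2 \int_0^t (\partial f/\partial x^1)(ue_1)\,du$ (which, since $f$ vanishes near $0$, equals $-\tfrac 1 2 f(te_1)$ by the fundamental theorem), and then back-substitution into the first equation yields the stated expression for $A(t)$.

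The only step requiring real care is the bookkeeping of vanishing coefficients: one must check that every Fermi-coordinate first derivative of $g^{ij}$, every off-diagonal block entry, and every Hessian entry of $p$ coupling the $e_1$ direction to the transverse $x'$ directions actually vanishes along the reference curve. Once that is secured, the remaining computation is elementary integration.
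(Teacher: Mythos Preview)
Your approach is exactly the paper's: plug into the linear system \eqref{eq: perturbed hamilton equations} along the reference curve, check that the ansatz is consistent, and invoke uniqueness. The paper phrases this as ``one checks directly that the stated functions solve \eqref{eq: perturbed hamilton equations}--\eqref{eq: perturbed initial data}''; you instead derive the solution by integrating, but this is the same verification read in the other direction, and your bookkeeping of the Fermi-coordinate vanishing is accurate.

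One point worth flagging: your integration gives $B(t) = -\tfrac12 f(te_1)$, whereas the lemma as printed asserts $\tfrac{\partial\xi}{\partial s} = -\tfrac12\,\tfrac{\partial f}{\partial x^1}(te_1)\,e_1$. Your formula is the correct one---plugging the printed expression back into the second line of \eqref{eq: perturbed hamilton equations} fails unless $\partial_1^2 f = \partial_1 f$ along the axis, and plugging it into the first line fails unless $f = \partial_1 f$---so the printed $\partial f/\partial x^1$ is a typo for $f$. This has no effect downstream, since the lemma is only applied at the terminal time where $f$ (and all its derivatives) vanish, so $\partial\xi/\partial s = 0$ there either way.
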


\begin{proof}
One checks directly that the solution in the statement solves \eqref{eq: perturbed hamilton equations} with the initial data
\[
	\frac{\partial x}{\partial s} = \frac{\partial \xi}{\partial s} = 0 \qquad \text{ at $t = 0$.}
\]
The lemma follows by uniqueness.
\end{proof}

The second kind of perturbation adjusts the trajectory of the geodesic, and is the more important of the two. For this, we use functions $f$ of the kind where $f(te_1) = 0$ for each $t$. The solution to \eqref{eq: perturbed hamilton equations} and \eqref{eq: perturbed initial data}.

We wish to, if not solve, then estimate the solution to \eqref{eq: perturbed hamilton equations} with initial data $\frac{\partial x}{\partial s} = 0$ and $\frac{\partial \xi}{\partial s} = 0$ at $t = 0$ for arbitrarily small $\epsilon > 0$. In the $\epsilon = 0$ limit, the pesky $\frac{\partial^2 p}{\partial x^i \partial x^j}$ term vanishes and we have an explicit solution
\begin{equation}\label{eq: idealized solution}
	\begin{split}
		\frac{\partial \tilde x}{\partial s} &= - \frac 1 2 \int_0^t (t - u) \nabla f(ue_1) \, du  \\
		\frac{\partial \tilde \xi}{\partial s} &= - \frac 1 2 \int_0^t \nabla f(ue_1) \, du.
	\end{split}
\end{equation}
The next lemma shows the true solution is not too far from this one if $\epsilon > 0$ is small enough.

\begin{lemma}
	Let $f$ be a smooth function supported in $U$ for which $f(te_1) = 0$ for each $t$. Then, with $\tilde x$ and $\tilde \xi$ as in \eqref{eq: idealized solution}, we have
	\[
		\left|\frac{\partial x}{\partial s} - \frac{\partial \tilde x}{\partial s} \right| = O(\epsilon^2) \qquad \text{ and } \qquad \left|\frac{\partial \xi}{\partial s} - \frac{\partial \tilde \xi}{\partial s} \right| = O(\epsilon^2)
	\]
	uniformly for $t \in [0,1]$.
\end{lemma}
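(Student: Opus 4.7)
The plan is to linearize the true system around the idealized one, bound the discrepancy coefficient by $O(\epsilon^2)$ using \eqref{eq: fermi conditions} together with the scaling, and then close with a Grönwall estimate. Set $Y(t) := \tfrac{\partial x}{\partial s}(t) - \tfrac{\partial \tilde x}{\partial s}(t)$ and $\eta(t) := \tfrac{\partial \xi}{\partial s}(t) - \tfrac{\partial \tilde \xi}{\partial s}(t)$. Because the hypothesis $f(te_1) = 0$ annihilates the $f\xi_i$ source in the first line of \eqref{eq: perturbed hamilton equations}, and because both systems share the $-\tfrac{1}{2}\partial_{x^i} f(te_1)$ source in the second line, subtracting the idealized equations from \eqref{eq: perturbed hamilton equations} will yield the linear system
\[
\dot Y^i = \eta_i, \qquad \dot \eta_i = -\tfrac{1}{2} A_{ij}(t)\bigl(Y^j + \tfrac{\partial \tilde x^j}{\partial s}\bigr), \qquad Y(0) = \eta(0) = 0,
\]
where $A_{ij}(t) := \tfrac{\partial^2 p_g}{\partial x^i \partial x^j}(\epsilon; te_1, e_1)$ is precisely the term dropped in passing to the idealized system.

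Next I will establish the key estimate $|A_{ij}(t)| \leq C \epsilon^2$ uniformly for $t \in [0,1]$. Since the $\epsilon$-dependence of the scaled metric enters only through the precomposition $x \mapsto \epsilon(x - e_1) + e_1$, the chain rule produces
\[
\tfrac{\partial^2 p_g}{\partial x^i \partial x^j}(\epsilon; x, \xi) = \epsilon^2\, \tfrac{\partial^2 g^{k\ell}}{\partial y^i \partial y^j}\bigl(\epsilon(x - e_1) + e_1\bigr)\xi_k \xi_\ell.
\]
At $x = te_1$ the base point lies on the segment $[0,1] \times \{0\}$, on which the unscaled second partials of $g^{k\ell}$ are controlled by a constant depending only on the background metric; this yields the bound. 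This is the only place where \eqref{eq: fermi conditions} and the scaling are used in an essential way.

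The final step is routine: $\tfrac{\partial \tilde x}{\partial s}$ is bounded on $[0,1]$ by a constant $C_f$ depending only on $f$, so the inhomogeneity $\tfrac{1}{2} A(t)\tfrac{\partial \tilde x}{\partial s}$ driving the $(Y, \eta)$ system is $O(\epsilon^2)$. Integrating and applying Grönwall's inequality will give $|Y(t)| + |\eta(t)| \leq C \epsilon^2 \exp(C \epsilon^2) = O(\epsilon^2)$ on $[0,1]$, which is exactly the claim. The only nonroutine step is the $O(\epsilon^2)$ bound on $A(t)$; once the chain rule factors out the $\epsilon$-dependence this is immediate, and the Grönwall estimate is standard.
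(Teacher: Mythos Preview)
Your proposal is correct and follows essentially the same strategy as the paper: subtract the idealized from the true variation equations, exploit the $O(\epsilon^2)$ bound on $\nabla_x^2 p$ coming from the scaling of \eqref{eq: fermi conditions}, and close with Gr\"onwall. The paper packages the estimate through an energy functional $E = \tfrac12|u|^2 + |v|^2$ and leaves the forcing as $-\tfrac12\nabla_x^2 p\,\tfrac{\partial x}{\partial s}$ (using an a priori bound on $\tfrac{\partial x}{\partial s}$), whereas you split $\tfrac{\partial x}{\partial s} = Y + \tfrac{\partial \tilde x}{\partial s}$ and apply Gr\"onwall directly to $|Y|+|\eta|$; both are routine variants of the same argument, and your chain-rule derivation of the $\epsilon^2$ factor is in fact more explicit than the paper's.
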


\begin{proof}
Set
\[
	u = \frac{\partial x}{\partial s} - \frac{\partial \tilde x}{\partial s} \qquad \text{ and } \qquad v = \frac{\partial \xi}{\partial s} - \frac{\partial \tilde \xi}{\partial s},
\]
let $\nabla^2_x p$ denote the Hessian matrix of $p$ in the $x$-variables only,
and note $u$ and $v$ solve the nearly Hamiltonian system
\[
	\begin{split}
		u' &= v \\
		v' &= - \frac 1 2 \nabla_x^2 p \frac{\partial x}{\partial s} - \frac 1 2 u,
	\end{split}
\]
with $u = v = 0$ at $t = 0$. Now, consider the energy
\[
	E(t) = \frac 1 2 |u(t)|^2 + |v(t)|^2
\]
and note
\begin{align*}
	E' &= \langle u, u' \rangle + 2 \langle v, v' \rangle = - \left\langle v, \nabla_x^2 p \frac{\partial x}{\partial s} \right\rangle,
\end{align*}
and hence
\[
	|E'| \leq \sup_{t \in [0,1]} \|\nabla_x^2 p\| \left| \frac{\partial x}{\partial s} \right| |v| \leq a b \epsilon^2 \sqrt{E} \leq \frac{1}{2} \left( ab \epsilon^4 + ab E \right)
\]
where $a$ and $b$ are constants for which
\[
	\|\nabla_x^2 p\| \leq a \epsilon^2 \qquad \text{ and } \qquad \left|\frac{\partial x}{\partial s} \right| \leq b
\]
for each $t \in [0,1]$ and $0 < \epsilon < 1$. By Gr\"onwall's inequality, and since $E(0) = 0$,
\[
	E(t) \leq \epsilon^4 
	e^{\frac{ab}{2} t} \qquad \text{ for $t \in [0,1]$}.
\]
The lemma follows.
\end{proof}

Next, we consider a $(2n - 1)$-parameter family of perturbations. Let $h_1, h_2, h_3$ all be smooth functions supported in $U$ satisfying the following conditions. For $h_1$, we assert
\[
	\frac{\partial h_1}{\partial x^i}(te_1) = 0 \qquad \text{for each $t \in [0,1]$ and $i \geq 2$}
\]
and
\[
	\frac 1 2 \int_0^1 h_1(te_1) \, dt = 1.
\]
For $h_2$ and $h_3$, we assert
\begin{equation}\label{eq: direction-changing perturbation}
	-\frac 1 2 \int_0^1 h_2(te_1) \, dt = 1 \qquad \text{ and } \qquad -\frac 1 2 \int_0^1 (1 - t) h_2(te_1) \, dt = 0
\end{equation}
and
\[
	-\frac 1 2 \int_0^1 h_3(te_1) \, dt = 0 \qquad \text{ and } \qquad -\frac 1 2 \int_0^1 (1 - t) h_3(te_1) \, dt = 1.
\]
Then, consider the function
\begin{equation}\label{eq: final conformal perturbation}
	f_s(x) = s_1 h_1(x) + \sum_{i = 2}^n s_i x_i h_2(x) + \sum_{i = 2}^n s_{i+n-1} x_{i} h_3(x)
\end{equation}
for $s = (s_1, \ldots, s_{2n-1}) \in \R^{2n-1}$ along with the corresponding perturbation
\[
	(1 + f_s)p
\]
of the symbol. Combined with the previous lemmas, we obtain:

\begin{lemma} With $f_s$ as above, set $p_s(x,\xi) = (1 + f_s(x))p(x,\xi)$, and let $(x(\epsilon,s;t), \xi(\epsilon,s;t))$ be the family of integral curves to the Hamiltonian vector field $\frac 1 2 H_{p_s}$ with initial conditions
\[
	(x(\epsilon,s;0), \xi(\epsilon,s;0)) = (0,e_1).
\]
Then,
\[
	D_s (x,\xi) = \begin{bmatrix}
		I_n & 0 \\
		0 & 0 \\
		0 & I_{n-1}
	\end{bmatrix}
	+ O(\epsilon^2) \qquad \text{ at $s = 0$, $t = 1$} 
\]
\end{lemma}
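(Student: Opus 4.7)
The plan is to exploit the linearity of $f_s$ in $s$. Since $s\mapsto p_s=(1+f_s)p$ depends linearly on $s$, the linearization of the time-$1$ flow at $s=0$ in each coordinate $s_j$ is unaffected by the other coordinates, and the column of $D_s(x,\xi)|_{s=0,\,t=1}$ corresponding to $s_j$ can be computed by linearizing in the single summand of $f_s$ that $s_j$ multiplies. This splits the computation into three independent cases -- one for $h_1$, one for the $x_ih_2$ terms, and one for the $x_ih_3$ terms -- each of which falls under the hypotheses of one of the two preceding lemmas.

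For the $s_1$-column, $h_1$ satisfies the hypotheses of the first lemma: vanishing transverse gradient on the axis and support bounded away from $0$ and $e_1$. Evaluating its conclusion at $t=1$, together with the normalization $\tfrac{1}{2}\int_0^1 h_1(ue_1)\,du=1$, gives $\partial x/\partial s_1=e_1$, while the $\xi$-formula vanishes at $t=1$ because $h_1$ is identically zero near $e_1$. For each $s_i$ with $2\leq i\leq n$, the perturbation $f(x)=x_ih_2(x)$ vanishes on the axis, so the second lemma applies. On the axis $\nabla f(ue_1)=h_2(ue_1)e_i$, so substituting into the idealized formulas \eqref{eq: idealized solution} and invoking the two integral conditions on $h_2$ yields $\partial\tilde x/\partial s_i|_{t=1}=0$ and $\partial\tilde\xi/\partial s_i|_{t=1}=e_i$; the second lemma upgrades these to $\partial x/\partial s_i=O(\epsilon^2)$ and $\partial\xi/\partial s_i=e_i+O(\epsilon^2)$. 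The analogous computation for the $h_3$-family, with the roles of the two integral conditions swapped, gives $\partial x/\partial s_{i+n-1}=e_i+O(\epsilon^2)$ and $\partial\xi/\partial s_{i+n-1}=O(\epsilon^2)$.

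Collecting the columns -- reordered so that the $n$ parameters producing $x$-displacement ($s_1$ together with $s_{n+1},\ldots,s_{2n-1}$, contributing $e_1,\ldots,e_n$) come first, and the $n-1$ parameters producing $\xi'$-displacement ($s_2,\ldots,s_n$, contributing $e_2,\ldots,e_n$) come last -- reproduces exactly the block form in the statement. I do not anticipate any substantive obstacle: existence of $h_1,h_2,h_3$ meeting the listed support and integral conditions reduces to constructing bump functions satisfying a rank-$2$ linear system on an infinite-dimensional space, and the only genuine technical input is the $O(\epsilon^2)$ error estimate of the second lemma, which controls the deviation of the true variational system from its flat-space ($\epsilon=0$) idealization.
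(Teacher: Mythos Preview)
Your proposal is correct and matches the paper's approach: the paper itself gives no proof beyond the line ``Combined with the previous lemmas, we obtain,'' and your column-by-column computation (first lemma for $s_1$, second lemma with $h_2$ for $s_2,\ldots,s_n$, second lemma with $h_3$ for $s_{n+1},\ldots,s_{2n-1}$) is exactly the intended derivation. Your observation that a column reordering is needed to literally match the displayed block form is accurate---with the parameters listed in the natural order the $x$-displacing columns are $s_1,s_{n+1},\ldots,s_{2n-1}$ rather than $s_1,\ldots,s_n$---but this is a harmless relabeling that does not affect the full-rank conclusion, and the paper is simply being loose about it.
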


We note that the geometer's geodesic flow for the metric $g$, i.e. the flow along the Hamiltonian vector field $\tfrac 1 2 H_{p}$, is homogeneous in the sense that
\begin{equation}\label{eq: flow homogeneity}
	\exp(\tfrac 1 2 H_{p})(0, \tau e_1) = (\tau e_1, \tau e_1),
\end{equation}
and hence
\[
	\left.\frac{\partial}{\partial \tau}\right|_{\tau = 1} \exp(\tfrac 1 2 H_{p})(0, \tau e_1) = (e_1, e_1).
\]
Combining this observation with the previous lemma yields:

\begin{lemma} \label{lem: local full rank}
	With $f_s$ as above, we have
	\[
		D_{\xi_1,s} \exp\big(\tfrac 1 2 H_{(1 + f_s)p}\big)(0,\xi_1 e_1) = \begin{bmatrix}
			* & I_{n} & 0 \\
			1 & 0 & 0 \\
			0 & 0 & I_{n-1}
		\end{bmatrix} + O(\epsilon^2).
	\]
	at $(\xi_1,s) = (1,0)$. That is, $\exp(\frac 1 2 H_{(1 + f_s)p})(0,\xi_1 e_1)$ has surjective differential at $(\xi_1,s) = (1,0)$ for all sufficiently small $\epsilon > 0$.
\end{lemma}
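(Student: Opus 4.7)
The plan is to assemble the claimed Jacobian from two pieces—the previous lemma, which supplies the derivative block $D_s$ up to $O(\epsilon^2)$, and the homogeneity identity \eqref{eq: flow homogeneity}, which supplies the missing $\partial_{\xi_1}$-column—and then to verify that the resulting square matrix is invertible.

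For the first column, I would differentiate \eqref{eq: flow homogeneity} at $\tau=1$ to obtain
\[
\left.\frac{\partial}{\partial \xi_1}\right|_{\xi_1=1,\,s=0}\exp\bigl(\tfrac12 H_{(1+f_s)p}\bigr)(0,\xi_1 e_1)=(e_1,e_1).
\]
In the row block structure of $T^*M$ partitioned as $x$-block (size $n$), $\xi_1$-block (size $1$), and $(\xi_2,\dots,\xi_n)$-block (size $n-1$), this column reads $(e_1;1;0)^T$, supplying the first column of the displayed matrix with $\ast=e_1$.

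For the remaining $2n-1$ columns, I would invoke the previous lemma, organizing the $s$-parameters so that the first $n$ columns are the position-adjusting parameters $(s_1,s_{n+1},\dots,s_{2n-1})$—which fill out $I_n$ in the $x$-rows, with $h_1$ contributing $e_1$ and the $h_3$ family contributing $e_2,\dots,e_n$—and the last $n-1$ columns are the direction-adjusting parameters $(s_2,\dots,s_n)$, which fill out $I_{n-1}$ in the $(\xi_2,\dots,\xi_n)$-rows via the $h_2$ family. The zero entries appearing in the $\xi_1$-row beneath $I_n$ follow because $h_1$ is supported away from $e_1$ (forcing $\partial_{x^1}h_1(e_1)=0$ and killing the $\xi$-contribution from the distance-adjusting lemma) and because the $h_3$-contributions to $\tilde \xi$ at the endpoint lie in $\operatorname{span}\{e_2,\dots,e_n\}$ by \eqref{eq: idealized solution} together with the moment conditions defining $h_3$.

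To finish, I would verify invertibility of the resulting unperturbed $2n\times 2n$ matrix. It has the block form $\begin{pmatrix} A & B \\ 0 & I_{n-1} \end{pmatrix}$, whose determinant equals $\det A$ for $A=\begin{pmatrix} e_1 & I_n \\ 1 & 0 \end{pmatrix}$; subtracting the second column of $A$ from the first leaves the columns $(e_{n+1},e_1,e_2,\dots,e_n)$ of the standard basis of $\mathbb R^{n+1}$, so $\det A=(-1)^n\neq 0$. Continuity of the $O(\epsilon^2)$ corrections then preserves invertibility for all sufficiently small $\epsilon>0$, which is the asserted surjectivity. The only real hurdle is bookkeeping—aligning the three perturbation families $h_1,h_2,h_3$ with the three identity blocks across the two lemmas; once the ordering is sorted, the lemma is an immediate corollary of the previous one.
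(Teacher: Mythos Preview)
Your proposal is correct and follows exactly the paper's approach: the paper simply says the lemma is obtained by combining the previous lemma (the $D_s$ block) with the homogeneity observation \eqref{eq: flow homogeneity} (the $\partial_{\xi_1}$ column), and you have faithfully unpacked that combination. Your explicit column reordering, justification of the zero entries in the $\xi_1$-row, and determinant computation for invertibility are all correct details that the paper leaves implicit.
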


We are ready to transfer this lemma to our manifold.

\begin{proposition}\label{prop: conformal perturbation}
Suppose $(x_0, \xi_0) \in \dot T^*M$ satisfies
\begin{equation}\label{eq: non-closed condition}
	\exp(\tfrac t 2 H_p)(x_0, \xi_0) \neq (x_0, \xi_0) \qquad \text{ for all } t \in (0,1].
\end{equation}
Then, there exists an open neighborhood $U$ in $M$, an open neighborhood $S$ of the origin in $\R^{2n-1}$, and a smooth function $(s,x) \mapsto f_s(x)$ on $S \times U$ whose $x$-support is contained in $U$, such that
\[
	\exp(\tfrac 1 2 H_{(1 + f_s)p})(x_0, \tau\xi_0)
\]
has surjective differential $\R^{1 + (2n-1)} \to T_{(y_0,\eta_0)}T^*M$ at $(\tau,s) = (1,0)$, where
\[
	(y_0,\eta_0) = \exp(\tfrac 1 2 H_p)(x_0, \xi_0).
\]
\end{proposition}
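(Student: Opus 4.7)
The plan is to reduce Proposition \ref{prop: conformal perturbation} to Lemma \ref{lem: local full rank} by isolating an interior sub-arc of the trajectory $t \mapsto \exp(\tfrac{t}{2}H_p)(x_0,\xi_0)$, installing Fermi coordinates there, applying the local lemma to construct $f_s$, and composing the resulting perturbed flow with the unperturbed flow on either side. The key observations are that (i) outside the $x$-support of $f_s$ the perturbed and unperturbed flows coincide, so the full perturbed flow factors into three pieces of which only the middle depends on $s$, and (ii) varying $\tau$ is compatible, via the quadratic homogeneity of $p$ in $\xi$, with varying $\xi_1$ in the local model.

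First, I would pick a good sub-arc. Let $\gamma(t)$ denote the base point of $\exp(\tfrac{t}{2}H_p)(x_0,\xi_0)$. By \eqref{eq: non-closed condition}, the phase-space trajectory is an embedded arc in $T^*M$, so any self-intersection of $\gamma$ on $[0,1]$ must occur at a pair of times with distinct tangent vectors; such transverse self-intersections of a $C^\infty$ immersion of a compact interval form a finite set. Pick $t_\star \in (0,1)$ avoiding this set and $\delta > 0$ so small that $\gamma$ is injective on $[t_\star - \delta, t_\star + \delta] \subset (0,1)$ and that its image there is disjoint from $\gamma([0,1] \setminus (t_\star - \delta, t_\star + \delta))$.

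Next, I would install Fermi coordinates on a tubular neighborhood $V$ of $\gamma([t_\star - \delta, t_\star + \delta])$ chosen narrow enough that $V \cap \gamma([0,1])$ equals the sub-arc itself. After an affine rescaling the sub-arc becomes $[0,1] \times \{0\} \subset \R^n$ and the metric satisfies \eqref{eq: fermi conditions} with the quadratic-error parameter $\epsilon$ controlled by $\delta$; shrink $\delta$ so that $\epsilon$ is in the range in which Lemma \ref{lem: local full rank} applies. That lemma then produces a smooth $f_s$, $s \in \R^{2n-1}$, with $x$-support inside $V$; extend $f_s$ by zero to $M$ and take $U = V$. Fix $t_1 \in (0, t_\star - \delta)$ and $t_2 \in (t_\star + \delta, 1)$. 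By continuity of the Hamiltonian flow in both the Hamiltonian and the initial data, for $(\tau, s)$ in a small neighborhood of $(1, 0)$ the perturbed trajectory from $(x_0, \tau\xi_0)$ lies outside $\supp f_s$ for $t \in [0, t_1] \cup [t_2, 1]$, so the perturbed and unperturbed flows coincide on those intervals and
\[
\exp(\tfrac{1}{2} H_{(1+f_s)p})(x_0, \tau\xi_0) = \exp(\tfrac{1-t_2}{2}H_p) \circ \exp(\tfrac{t_2-t_1}{2} H_{(1+f_s)p}) \circ \exp(\tfrac{t_1}{2}H_p)(x_0, \tau\xi_0).
\]

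Finally, I would propagate the surjectivity. The outer factor $\exp(\tfrac{1-t_2}{2}H_p)$ is a local diffeomorphism near $\exp(\tfrac{t_2}{2}H_p)(x_0,\xi_0)$ and so preserves surjectivity of differentials. By the quadratic homogeneity of $p$ in $\xi$, the inner factor $\exp(\tfrac{t_1}{2}H_p)$ sends the line $\tau\mapsto (x_0, \tau\xi_0)$ to a curve which, through the Fermi chart and the Step-2 rescaling, identifies with the local model's initial-data line $\xi_1 \mapsto (0, \xi_1 e_1)$. Under these identifications the middle factor is precisely the local map $\exp(\tfrac{1}{2}H_{(1+f_s)p})(0,\xi_1 e_1)$ from Lemma \ref{lem: local full rank}, whose $(\tau, s)$-differential at $(1, 0)$ is the full-rank block matrix of that lemma. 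Hence the composed map has surjective differential at $(\tau, s) = (1, 0)$, as required. The main obstacle is the bookkeeping of the Fermi rescaling in Step 2: one must verify that the affine rescaling produces a metric genuinely of the form \eqref{eq: fermi conditions} with $O(\epsilon^2)$ error, and that the identification of the scaling parameter $\tau$ with $\xi_1$ is consistent with the Hamiltonian homogeneity. Once those identifications are made explicit, surjectivity transfers robustly because the leading local matrix is invertible and invertibility survives the $O(\epsilon^2)$ correction.
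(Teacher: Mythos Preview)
Your proposal is correct and follows essentially the same route as the paper: show the base geodesic has only finitely many self-intersections, isolate a clean sub-arc, install Fermi coordinates there, apply Lemma~\ref{lem: local full rank}, and use that the unperturbed flow on the remaining portion is a diffeomorphism together with the homogeneity \eqref{eq: flow homogeneity} to transport surjectivity. The only cosmetic difference is that the paper places the Fermi chart on the \emph{terminal} segment $\gamma([1-\epsilon,1])$ rather than an interior one, which eliminates your ``outer'' factor $\exp(\tfrac{1-t_2}{2}H_p)$ and makes the matching of $\tau$ with the local $\xi_1$-parameter a bit more direct; but since that outer factor is a diffeomorphism, your three-piece decomposition works equally well.
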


\begin{proof}
	Let $\gamma(t)$ be the geodesic in $M$ traced by the spacial component of the integral curve $\exp(\frac t 2 H_p)(x_0, \xi_0)$. The condition \eqref{eq: non-closed condition} ensures $\gamma$ intersects itself only finitely many times. The argument is perhaps a routine Riemannian geometry exercise, so we just sketch the steps.
First, since $\gamma$ is locally an embedding about each point and $[0,1]$ is compact, pairs of times $t \neq t'$ in $[0,1]$ at which $\gamma(t) = \gamma(t')$ are separated by a positive distance. If there are infinitely many such pairs, one takes a convergent sequence $(t_k, t_k') \to (t,t')$ and notes $t \neq t'$. Since $\gamma(t_k) = \gamma(t_k')$ for each $k$, we deduce $\gamma'(t) = \pm \gamma'(t')$. If the sign is positive, it contradicts \eqref{eq: non-closed condition}. If the sign is negative, the geodesic is ill-defined at $\frac{t + t'}{2}$.

	Since $\gamma$ has finitely many self-intersections, there exists $\epsilon > 0$ for which $\gamma : (1 - \epsilon, 1) \to M$ does not intersect $\gamma([0,1-\epsilon] \cup \{1\})$. Select Fermi normal coordinates about the geodesic segment $\gamma([1-\epsilon,1])$ satisfying \eqref{eq: fermi conditions}. Consider an open neighborhood $U$ intersecting $\gamma$ only along the segment $\gamma((1 - \epsilon, 1))$. Any perturbation of the metric supported in $U$ will leave $(x(1 - \epsilon), \xi(1 - \epsilon))$ unchanged. The lemma follows after taking $\epsilon > 0$ small, rescaling the interval $[1-\epsilon,1]$ to $[0,1]$, using \eqref{eq: flow homogeneity}, and Lemma \ref{lem: local full rank}.
\end{proof}
}

\begin{figure}
    \centering
    \includegraphics[width=0.45\textwidth]{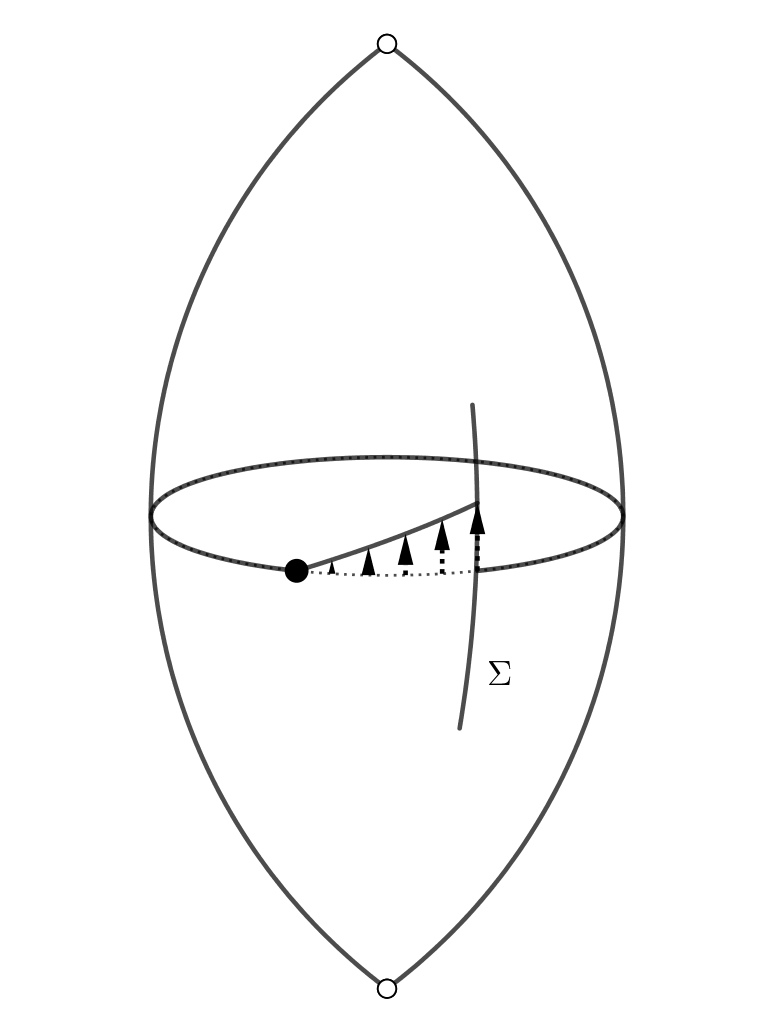}
    \includegraphics[width=0.45\textwidth]{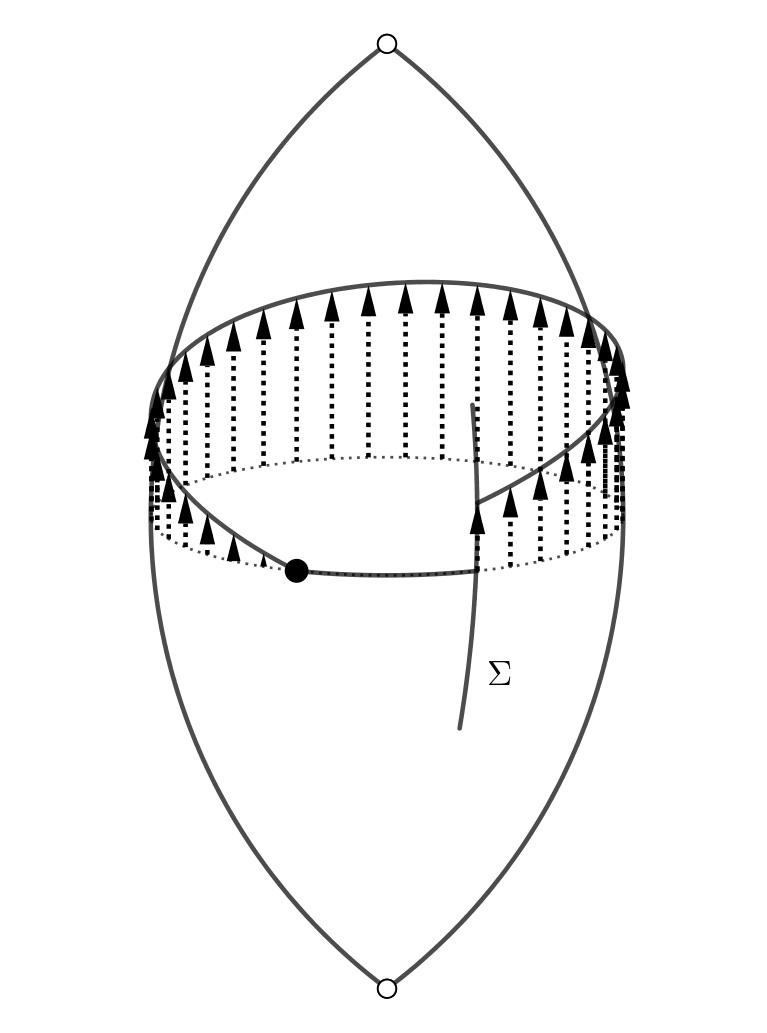}
    \caption{On the left, we see the effect of a sharp, localized perturbation of the metric on the first period of the equatorial geodesic. On the right, we see the effect of the perturbation on the second period.}
    \label{fig: footballs}
\end{figure}

\begin{remark}\label{rmk: footballs}
    The condition \eqref{eq: non-closed condition} in Proposition \ref{prop: conformal perturbation} is necessary. A closed geodesic necessarily passes through the support of a metric perturbation multiple times, and the effect of the perturbation can cancel itself on the second pass-through.
    To see how this is possible, consider a surface $M$ obtained by taking the quotient of the sphere, minus the poles, by the action of a half-turn about the vertical axis (see Figure \ref{fig: footballs}). Let $\Sigma$ be a segment of a longitudinal curve that intersects the equator. We consider a conformal perturbation of the metric as constructed from \eqref{eq: direction-changing perturbation}, where for the sake of this thought experiment, we will scale all the way to the limit $\epsilon \to 0$. The variation of the geodesic is a solution to a non-homogeneous Jacobi equation which is identically zero upon the second return to $\Sigma$.
\end{remark}

}

\section{Generic submanifolds have no closed normal geodesics}\label{sec: no closed normal}

{
The main proposition of this section shows that, for most metrics, an embedded submanifold intersects no closed geodesics conormally.

\begin{proposition}\label{prop: generically no conormal closed geodesics}
    For each $T > 0$, there exists an open-dense set of metrics $g$ such that, for each $(x,\xi) \in \dot N^*\Sigma$ with $p_g(x,\xi) \leq T$ and for each $t \in (0,1]$,
    \[
        \exp(\tfrac t 2 H_{p_g})(x,\xi) \neq (x,\xi).
    \]
\end{proposition}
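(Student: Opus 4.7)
The plan is to combine the bumpy metric theorem (Proposition 2.1) with the diffeomorphism-induced perturbation of Section 3.1 (Proposition 3.2). Since the geometer's geodesic flow is positively homogeneous,
\[
\exp\bigl(\tfrac{t}{2} H_{p_g}\bigr)(x,\lambda\xi) \;=\; \sigma_\lambda\,\exp\bigl(\tfrac{\lambda t}{2} H_{p_g}\bigr)(x,\xi) \quad \text{for } \lambda>0,
\]
the condition "$(x,\xi)\in\dot N^*\Sigma$ with $p_g(x,\xi)\le T$ is periodic for some $t\in(0,1]$" is equivalent, after rescaling to unit energy, to "some unit conormal in $SN^*\Sigma$ is periodic at some time in $(0,\sqrt T\,]$". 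Hence the proposition is equivalent to: for a generic metric, no closed geodesic of length at most $\sqrt T$ meets $\Sigma$ orthogonally.

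First I would apply Proposition 2.1 with time bound $\sqrt T+1$ to get an open–dense subset $\mathcal G^\star\subset\mathcal G$ consisting of metrics with only finitely many closed geodesics of period $\le \sqrt T+1$. After a further small perturbation within $\mathcal G^\star$ I may also assume that no closed geodesic has period exactly $\sqrt T$, so the finite list of closed geodesics of period $\le\sqrt T$ is locally constant under further small perturbations of the metric. Openness of the target set inside $\mathcal G^\star$ then follows from the smooth dependence of the (finitely many) nondegenerate $\gamma_i$ on $g$, together with the continuity of the orthogonality condition against the compact family of candidate intersections.

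For density, fix $g_0\in\mathcal G^\star$ with closed geodesics $\gamma_1,\dots,\gamma_N$ of length $\le\sqrt T$, and use the isometric perturbation $g_X=\exp(X)^*g_0$ of Section 3.1. Since $\exp(X)\colon(M,g_X)\to(M,g_0)$ is an isometry, $\gamma_i$ meets $\Sigma$ orthogonally in $(M,g_X)$ if and only if $\gamma_i$ meets $\exp(X)(\Sigma)$ orthogonally in $(M,g_0)$. The set of problematic intersections $x_{i,j}\in\gamma_i\cap\Sigma$ (with $\gamma_i'\in N_{x_{i,j}}\Sigma$) is finite, giving finitely many candidate points $(x_{i,j},\xi_{i,j})\in SN^*\Sigma$ to move off the orbit cone of the corresponding $\gamma_i$. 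At each such point Proposition 3.2 supplies a local parametric family $\tau_{F(a,b)}$ with surjective differential onto $T^*M$, so the preimage in parameter space of the $2$-dimensional orbit cone of $\gamma_i$ is a closed nowhere-dense set, and a generic small choice of $(a,b)$ sends $\tau_{F(a,b)}(x_{i,j},\xi_{i,j})$ off this cone. Choosing bumps with pairwise disjoint supports around each $x_{i,j}$, the local perturbations compose independently and yield an arbitrarily small $X$ for which $g_X$ satisfies the proposition. Density in $\mathcal G$ follows by composition: approximate any $g$ by some $g_0\in\mathcal G^\star$, and then perturb $g_0$ isometrically.

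The main obstacle lies in the dimension count at the transversality step: in $T^*M$ (dimension $2n$) the orbit cone of $\gamma_i$ has dimension $2$ and $\dot N^*\Sigma$ has dimension $n$, so the expected intersection dimension is $2-n$. For $n\ge 3$ this is strictly negative and generic emptiness is immediate from parametric transversality against the submersion $\tau_{F(a,b)}$, while for $n=2$ the count is borderline (isolated transverse intersections) and the surjective differential of Proposition 3.2 must be used to eliminate those intersection points one by one. Once this is handled uniformly, a finite intersection of the resulting open–dense subsets over the pairs $(i,j)$ yields the advertised open-dense set of metrics.
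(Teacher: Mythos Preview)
Your overall strategy coincides with the paper's: start from the bumpy set (Proposition~2.1), argue openness, and obtain density via the diffeomorphism-induced perturbation of Proposition~3.2 together with parametric transversality. The openness argument you sketch (tracking finitely many nondegenerate closed geodesics) differs from the paper's direct proof that the exceptional set is closed in $\mathcal G$, but is workable.

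There is, however, a genuine gap in your density argument, concentrated in the $n=2$ case. You say that for $n=2$ the dimension count is ``borderline (isolated transverse intersections)'' and that these are then ``eliminated one by one'' using the surjective differential. This does not work: a transverse $0$-dimensional intersection is stable under all small perturbations, so no further small perturbation can remove it. The related earlier step---moving each individual point $(x_{i,j},\xi_{i,j})$ off the orbit cone---has the same defect: it does not preclude a nearby point of the perturbed $\dot N^*\Sigma$ from landing on the orbit cone.

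The repair the paper uses, which you are missing, is the observation that both $\dot N^*\Sigma$ and the orbit cone $\Gamma_T$ are invariant under fiberwise dilations, and the perturbation $\tau_{F(a,b)}$ is linear on fibers, hence preserves this conical structure. Consequently, any nonempty intersection $\tau_{F(a,b)}(\dot N^*\Sigma)\cap\Gamma_T$ contains full radial segments and so is at least $1$-dimensional. But parametric transversality forces the intersection (if nonempty) to have dimension $n+2-2n=2-n$; for $n\ge 2$ this is $\le 0$, contradicting the $\ge 1$ just obtained, so the intersection is empty for a residual set of parameters. This handles all $n\ge 2$ uniformly and replaces your attempted ``one by one'' elimination.
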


Taking the countable intersection over $T \in \N$ yields a residual set of metrics for which the proposition holds without constraints on $p_g(x,\xi)$, but this is not what we need here.

Our starting point is Proposition \ref{prop: bumpy metric}, which gives an open-dense set, say $\mathcal G_T$, of metrics for which there are only finitely many $(x,\xi) \in \dot T^*M$ with $p_g(x,\xi) \leq T$ and
\[
    \exp(\tfrac t 2 H_{p_g})(x,\xi) = (x,\xi).
\]
We first argue:

\begin{lemma}
    The exceptional set of metrics $g$ for which there exists $(x,\xi) \in \dot N^* \Sigma$ with $p_g(x,\xi) \leq T$ and
    \[
        \exp(\tfrac t 2 H_{p_g})(x,\xi) = (x,\xi) \qquad \text{ for some } t \in (0,1]
    \]
    is closed in $\mathcal G$, and hence also in $\mathcal G_T$.
\end{lemma}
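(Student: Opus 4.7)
The plan is to show the exceptional set $\mathcal E \subset \mathcal G$ is sequentially closed, which suffices since $\mathcal G$ is metrizable. Given $g_n \to g$ in $\mathcal G$ with each $g_n \in \mathcal E$, I must produce a witness $(x, \xi, t)$ for $g$. By hypothesis there exist $(x_n, \xi_n) \in \dot N^*\Sigma$ and $t_n \in (0,1]$ with $p_{g_n}(x_n, \xi_n) \le T$ and $\exp(\tfrac{t_n}{2} H_{p_{g_n}})(x_n, \xi_n) = (x_n, \xi_n)$. Since $\Sigma$ is compact and the coefficients of $p_{g_n}$ converge uniformly to those of $p_g$, the covectors $\xi_n$ are uniformly bounded in any fixed auxiliary norm on $T^*M|_\Sigma$. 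Passing to a subsequence I may therefore assume $(x_n, \xi_n, t_n) \to (x, \xi, t) \in N^*\Sigma \times [0,1]$.

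The main step is to rule out the two degenerate limits $\xi = 0$ and $t = 0$; in either case $(x,\xi,t)$ would fail to lie in $\dot N^*\Sigma \times (0,1]$ and the fixed-point equation would become vacuous. I would handle both simultaneously via a uniform lower bound on the length of closed geodesics for metrics near $g$. The injectivity radius depends continuously on the metric in the $C^2$ topology (hence also in the $C^\infty$ topology of $\mathcal G$), so there exists $r > 0$ with $\inj(M, g_n) \ge r$ for all large $n$; consequently every closed geodesic of $(M, g_n)$ has length at least $r$. The closed curve traced by the spatial component of $s \mapsto \exp(\tfrac{s}{2} H_{p_{g_n}})(x_n, \xi_n)$ over $s \in [0, t_n]$ has length $L_n = t_n \sqrt{p_{g_n}(x_n, \xi_n)}$, since Hamilton's equations give speed $|\dot x|_{g_n} = \sqrt{p_{g_n}}$ and $p_{g_n}$ is conserved along orbits. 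Combining $L_n \ge r$ with $t_n \le 1$ and $p_{g_n}(x_n, \xi_n) \le T$ yields both $\sqrt{p_{g_n}(x_n, \xi_n)} \ge r$ and $t_n \ge r/\sqrt{T}$, so $\xi \ne 0$ and $t > 0$ in the limit.

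Finally, since the geometer's geodesic flow depends jointly continuously on $(g, x, \xi, t) \in \mathcal G \times T^*M \times \R$, taking the limit in the fixed-point equation produces
\[
\exp(\tfrac{t}{2} H_{p_g})(x, \xi) = (x, \xi)
\]
with $(x,\xi) \in \dot N^*\Sigma$, $t \in (0,1]$, and $p_g(x,\xi) \le T$. Hence $g \in \mathcal E$, and $\mathcal E$ is closed. The only real obstacle is ruling out the collapse of the closed orbit in the limit, i.e., the possibilities $\xi_n \to 0$ or $t_n \to 0$; the uniform injectivity-radius lower bound is exactly what prevents this, after which the remainder is a routine compactness-and-continuity argument.
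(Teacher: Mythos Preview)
Your argument is correct and follows essentially the same route as the paper: take a sequence of witnesses, extract a convergent subsequence by compactness, use a uniform lower bound on the injectivity radius to rule out the degenerate limits $\xi=0$ and $t=0$, and pass to the limit by joint continuity of the flow. The only cosmetic difference is that you write the length of the closed orbit as $t_n\sqrt{p_{g_n}(x_n,\xi_n)}$ and bound each factor accordingly, whereas the paper bounds $t_n\,p_{g_n}(x_n,\xi_n)$ directly; both lead to the same conclusions.
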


\begin{proof}
    Consider the exceptional set $E \subset \mathcal G \times (0,1] \times \dot N^*\Sigma$ of elements $(g,t,x,\xi)$ for which
    \[
        p_g(x,\xi) \leq T, \qquad \text{ and } \qquad \exp(\tfrac t 2 H_{p_g})(x,\xi) = (x,\xi).
    \]
    Let $\pi : E \to G$ be the standard projection. We claim $\pi(E)$ is closed. Consider a sequence of metrics $g_n \in \pi(E)$ that converges to $g \in \mathcal G$. Let $(g_n, t_n, x_n, \xi_n) \in E$ be some element in $E$ that projects to $g_n$. Let $C$ be a constant for which
    \[
        p_{g_n} \leq C p_g \qquad \text{ for each } n \in \N,
    \]
    and note that since $p_{g_n}(x_n, \xi_n) \leq T$ for each $n$,
    \[
        p_g(x_n, \xi_n) \leq C T \qquad \text{ for each $n \in \N$}.
    \]
    Refine the sequence so that $(x_n, \xi_n) \to (x,\xi) \in N^*\Sigma$ and note
    \[
        p_g(x,\xi) = \lim_{n \to \infty} p_{g_n}(x_n, \xi_n) \leq T.
    \]
    Next, let $\epsilon > 0$ bound the injectivity radii of $g_n$ and $g$ from below, and note that if
    \[
        \exp(\tfrac {t_n} 2 H_{p_{g_n}})(x_n, \xi_n) = (x_n, \xi_n),
    \]
    then $t_n p_{g_n}(x_n, \xi_n) \geq \epsilon$. From this, we conclude
    \[
        t_n \in [\tfrac \epsilon T, 1] \qquad \text{ and } p_{g_n}(x_n, \xi_n) \geq \epsilon.
    \]
    We refine our sequence further so that $t_n$ converges to $t \in [\tfrac \epsilon T, 1]$, and note in particular that
    \[
        t \in (0,1] \qquad \text{ and } \qquad 0 < p_g(x,\xi) \leq T.
    \]
    Finally, $\exp(\frac t 2 H_{p_g})(x,\xi) = (x,\xi)$ too, and we conclude $g \in \pi(E)$, as desired.
\end{proof}

Secondly, we will use the perturbation introduced in Section \ref{sec: perturbation 1} to show:

\begin{lemma}
    The exceptional set of metrics $g$ for which there exists $(x,\xi) \in \dot N^* \Sigma$ with $p_g(x,\xi) \leq T$ and
    \[
        \exp(\tfrac t 2 H_{p_g})(x,\xi) = (x,\xi) \qquad \text{ for some } t \in (0,1]
    \]
    has dense complement in $\mathcal G_T$.
\end{lemma}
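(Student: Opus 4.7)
The plan is to recast the exceptional condition as a transversality question in $T^*M$ and apply the isometric perturbations of Section \ref{sec: perturbation 1} together with a Sard-type dimension count. Since $g \in \mathcal G_T$ admits only finitely many closed orbits $\gamma_1,\ldots,\gamma_k$ of $\tfrac 1 2 H_{p_g}$ with $p_g \le T$ and period in $(0,1]$, their union $A = \bigcup_i \gamma_i$ is a compact $1$-dimensional subset of $T^*M$. I would use the perturbations $g_X = \exp(X)^*g$; since these lie in $\mathcal I(g) \subset \mathcal G_T$ and since under the canonical lift $\tau_X$ the closed orbits of $g_X$ are exactly $\tau_X^{-1}(\gamma_i)$, while $\dot N^*\Sigma$ is metric-independent, the exceptional condition for $g_X$ becomes
\[
    A \cap \tau_X(\dot N^*\Sigma) \ne \emptyset.
\]
The task reduces to producing arbitrarily small $X$ for which this intersection is empty.

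Next, I would cover the compact set $\dot N^*\Sigma \cap A$ by finitely many sets $\dot T^*U_\alpha$, where each $U_\alpha \subset M$ is a chart on which Proposition \ref{prop: perturbation} supplies a local perturbation family $F_\alpha \colon \R^{n+n^2} \to \mathcal X(M)$ with $F_\alpha(0) = 0$ and full-rank differential of $\tau_{F_\alpha}$. Summing yields a finite-dimensional parameter $\mathbf s \in \R^N$ and a combined vector field $X(\mathbf s) = \sum_\alpha F_\alpha(a_\alpha, b_\alpha)$, so that the evaluation
\[
    \Psi \colon \R^N \times \dot N^*\Sigma \longrightarrow T^*M, \qquad \Psi(\mathbf s, y, \eta) = \tau_{X(\mathbf s)}(y,\eta)
\]
is a submersion in a neighborhood of $\{0\} \times (\dot N^*\Sigma \cap A)$. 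Since $A$ has codimension $2n - 1$ in $T^*M$, the preimage $\Psi^{-1}(A)$ restricted to this submersive region is a submanifold of dimension $(N + n) - (2n - 1) = N - n + 1$, and for $n \ge 2$ this is strictly less than $N$. Hence, by Sard, the projection of $\Psi^{-1}(A)$ onto the parameter space---which is exactly the set of bad parameters---has measure zero. Any $\mathbf s$ in the complement, arbitrarily close to $0$, yields a perturbation placing $g_{X(\mathbf s)}$ in any prescribed $C^\infty$ neighborhood of $g$ while removing all the forbidden closed orbits.

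The step I expect to be most delicate is the \emph{localization} of the Sard argument: the map $\Psi$ fails to be a submersion at points $(y, \eta) \in \dot N^*\Sigma$ with $y$ outside $\bigcup_\alpha U_\alpha$, so one must verify that for sufficiently small $\mathbf s$ every contribution to $\Psi^{-1}(A)$ arises from $y \in \bigcup_\alpha U_\alpha$. This follows from the compactness of $\dot N^*\Sigma \cap A$ and a continuity argument: for small $\mathbf s$, any $(y, \eta)$ with $\tau_{X(\mathbf s)}(y, \eta) \in A$ must be close to a point of the initial intersection $\dot N^*\Sigma \cap A$, which our chart cover is chosen to contain in an open neighborhood. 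If inconvenient to do all at once, one can proceed inductively, removing the intersection with one $\gamma_i$ at a time and using that the non-intersection property is open and stable under further small perturbations.
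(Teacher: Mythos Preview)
Your approach matches the paper's: pull back by $\exp(X)$, reduce to showing $\tau_X(\dot N^*\Sigma)$ misses the closed-orbit locus in $T^*M$, and use the full-rank family from Proposition~\ref{prop: perturbation} plus Thom/Sard.  There is, however, a genuine dimension error that breaks the argument for surfaces.

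You assert that $g\in\mathcal G_T$ has \emph{finitely many} closed orbits $\gamma_1,\dots,\gamma_k$ of $\tfrac12 H_{p_g}$ in $T^*M$ with $p_g\le T$ and period in $(0,1]$, so that $A=\bigcup_i\gamma_i$ is $1$-dimensional of codimension $2n-1$.  This is false.  The flow $\tfrac12 H_{p_g}$ is homogeneous: if $(x,\xi)$ lies on a closed orbit of period $t_0$, then $(x,\lambda\xi)$ lies on a closed orbit of period $t_0/\lambda$ for every $\lambda>0$.  Hence each closed geodesic in $S^*M$ contributes an entire $2$-dimensional cone segment to $A$ (one dimension along the orbit, one radial), and $A$ has codimension $2n-2$, not $2n-1$.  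This is exactly what the paper calls $\Gamma_T$, described there as a finite union of $2$-dimensional components.  With the corrected codimension your preimage $\Psi^{-1}(A)$ has dimension $N-n+2$, so Sard only gives a $0$-dimensional (not empty) fibre when $n=2$.

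The paper closes this gap by exploiting conicity: $\dot N^*\Sigma$ and $\Gamma_T$ are both invariant under fibre dilations, and $\tau_{F(a,b)}$ is fibre-linear, so any intersection $\tau_{F(a,b)}(\dot N^*\Sigma)\cap\Gamma_T$ is itself conic and hence at least $1$-dimensional if nonempty.  A transversal intersection would have dimension $n+2-2n=2-n$, and $2-n<1$ for $n\ge2$ forces emptiness.  Your writeup needs this homogeneity step (or an equivalent reduction to $S^*M$) to handle $n=2$; otherwise the argument is incomplete precisely in the surface case.  A minor related point: $A$ is not compact (it approaches the zero section), but once you use the radial invariance the finiteness you need is over the base, where indeed only finitely many points of $\Sigma$ are involved.
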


Once proved, the two lemmas complete the proof of Proposition \ref{prop: generically no conormal closed geodesics}.

\begin{proof}
    Fix $g \in \mathcal G_T$ and let
    \begin{multline*}
        \Gamma_T = \{(x,\xi) \in \dot T^*M : 0 < p_g(x,\xi) \leq T \\
        \text{ and } \exp(\tfrac t 2 H_{p_g})(x,\xi) = (x,\xi) \text{ for some $t \in (0,1]$}\}.
    \end{multline*}
    Note, $\Gamma_T$ is the finite disjoint union of $2$-dimensional connected components, and $\Gamma_T \cap \dot N^*\Sigma$ is a disjoint union of $1$-dimensional connected components, each of which lies tangent to the radial vector field in the fiber of $\dot T^*M$. Let $x_0 \in M$ be the projection of such a component to $M$. Consider the parametrized family of vector fields $F : \R^{n + n^2} \to \mathcal X(M)$ centered at $x_0$ as in Proposition \ref{prop: perturbation}. By the proposition and Thom's transversality theorem, there is a residual set of parameters $(a,b) \in \R^{n + n^2}$ for which $\tau_{F(a,b)}(\dot N^*\Sigma)$ intersects $\Gamma_T$ transversally. If the intersection is nonempty, it has dimension $1$, and yet
    \[
        2n = \dim T^*M = \dim(\dot N^*\Sigma) + \dim(\Gamma_T) - 1 = n + 1,
    \]
    which is an absurdity for $n \geq 2$. Hence, for most $(a,b)$, the intersection is empty. By the discussion preceding Proposition \ref{prop: perturbation}, the pullback metric
    \[
        \exp(F(a,b))^*g
    \]
    admits no intersection of $\dot N^*\Sigma$ and $\Gamma_T$ at $x_0$ for a dense set of $(a,b) \in \R^{n + n^2}$. Note, $\exp(F(a,b))^*g$ is in the same isometry class as $g$, and hence is also a member of $\mathcal G_T$, and can be made close to $g$ provide $(a,b)$ is close to the origin. We are done after repeating the procedure for each of the finitely many sites of intersection.
\end{proof}
}

\begin{figure}
\begin{tikzpicture}[scale=0.67]

    \draw[->, thick, dashed] (3,1.4) arc[start angle=405, end angle=45, x radius=3, y radius=2];
    \node[label=$\gamma$] at (3,1.4) {};

    \draw[domain=1:6,samples=100,thick] plot (\x,0);
    \node[label=$\Sigma$] at (5,0) {};
\end{tikzpicture}
\hfill
\begin{tikzpicture}[scale=0.67]

    \draw[->, thick, dashed] (3,1.4) arc[start angle=405, end angle=45, x radius=3, y radius=2];
    \node[label=$\gamma$] at (3,1.4) {};

    \draw[domain=1:6,samples=100,thick] plot (\x,{0.5*sin(\x r + 100)});

    \node[label=$e^{F(a,b)}(\Sigma)$] at (5.5,0.5) {};
\end{tikzpicture}
\caption{A diffeomorphism $\exp(F(a,b)) : M \to M$, for many choices of $(a,b)$, prevents the perpendicular intersection of $\Sigma$ and a closed geodesic $\gamma$.}
\label{fig: closed geodesic}
\end{figure}
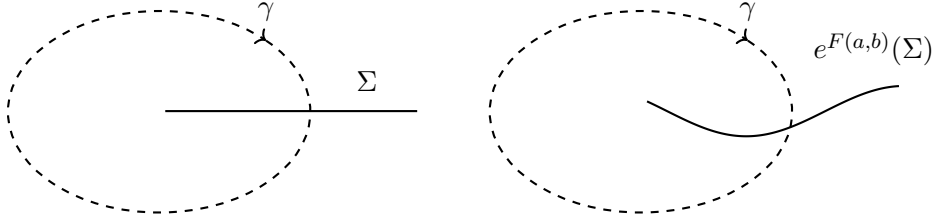

}

\section{Proof of Theorem \ref{thm: main}}\label{sec: proof of main}

{

Let $\mathcal G_T$ denote the set of metrics $g$ for which there are only finitely many geodesics that close by time $T$. Proposition \ref{prop: bumpy metric} ensures $\mathcal G_T$ is open-dense in the full space of metrics $\mathcal G$. Let $\mathcal G_T' \subset \mathcal G_T$ denote the subset of metrics for which there are no covectors $(x,\xi) \in \dot N^*\Sigma$ such that
\[
    p_g(x,\xi) \leq T \qquad \text{ and } \qquad \exp(\tfrac t 2 H_{p_g})(x,\xi) = (x,\xi) \text{ for some $t \in (0,1]$}.
\]
Proposition \ref{prop: generically no conormal closed geodesics} ensures $\mathcal G_T'$ is open-dense in $\mathcal G_T$, and hence also open-dense in $\mathcal G$. The next proposition concerns the set $\mathcal G_T'' \subset \mathcal G_T'$ of metrics for which
\[
    \exp(\tfrac 1 2 H_{p_g}) : \dot N^*\Sigma \to T^*M \text{ is transversal to } \dot N^*\Sigma \cap p_g^{-1}[0,\tfrac T 4],
\]
as required by Theorem \ref{thm: main}. 

\begin{proposition}\label{prop: almost there}
    The set of metrics $\mathcal G_T''$ is residual in $\mathcal G_T'$, and hence residual in $\mathcal G$.
\end{proposition}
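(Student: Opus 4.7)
The plan is to combine a reduction to a compact set, Proposition \ref{prop: conformal perturbation}, and Thom's parametric transversality theorem.

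First, I would observe that transversality is automatic in a neighborhood of the zero section of $N^*\Sigma$. Indeed, by homogeneity of $p_g$, the spatial projection of $\exp(\tfrac 1 2 H_{p_g})(x,\xi)$ for $(x,\xi) \in \dot N^*\Sigma$ lies at distance $|\xi|_g$ from $x$ along the unit normal direction to $\Sigma$, and is therefore off $\Sigma$ whenever $|\xi|_g$ is smaller than the normal injectivity radius. Consequently there exists $\delta_g > 0$ with
\[
    \exp(\tfrac 1 2 H_{p_g})(\dot N^*\Sigma) \cap \dot N^*\Sigma \cap p_g^{-1}(0,\delta_g] = \emptyset,
\]
and the transversality required by $\mathcal G_T''$ reduces to a check on the compact set $K_g := \dot N^*\Sigma \cap p_g^{-1}[\delta_g, T/4]$. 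Openness of $\mathcal G_T''$ in $\mathcal G_T'$ then follows by standard arguments, because transversality along a compact set is preserved under $C^\infty$-small perturbations of the metric.

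For density, I would fix $g_0 \in \mathcal G_T'$. By the very definition of $\mathcal G_T'$ (and since $T/4 \leq T$), every $(x_0,\xi_0) \in K_{g_0}$ satisfies the non-closure hypothesis $\exp(\tfrac t 2 H_{p_{g_0}})(x_0,\xi_0) \neq (x_0,\xi_0)$ for $t \in (0,1]$, so Proposition \ref{prop: conformal perturbation} applies at each such point: it furnishes an open neighborhood $V_{(x_0,\xi_0)} \subset \dot N^*\Sigma$ and a $(2n{-}1)$-parameter family of compactly supported conformal perturbations $g_s = (1+f_s)^{-1}g_0$ for which the evaluation map $(\tau,s) \mapsto \exp(\tfrac 1 2 H_{p_{g_s}})(x_0,\tau\xi_0)$ has surjective differential at $(\tau,s)=(1,0)$. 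By compactness I can cover $K_{g_0}$ by finitely many such neighborhoods $V_1,\ldots,V_N$, with associated perturbation parameters $s^{(1)},\ldots,s^{(N)}$. Forming $\mathbf{s} = (s^{(1)},\ldots,s^{(N)}) \in \R^{N(2n-1)}$ and superimposing the corresponding conformal perturbations gives a finite-dimensional family of metrics $g_{\mathbf{s}}$ with evaluation
\[
    F : \dot N^*\Sigma \times S \to T^*M, \qquad F(x,\xi,\mathbf{s}) = \exp(\tfrac 1 2 H_{p_{g_\mathbf{s}}})(x,\xi).
\]
At any $(x_0,\xi_0) \in V_k$, the partial derivative of $F$ in the $s^{(k)}$-direction at $\mathbf{s}=0$ coincides with the one provided by Proposition \ref{prop: conformal perturbation} and is therefore surjective onto $T_{F(x_0,\xi_0,0)}T^*M$; in particular $F$ is a submersion on all of $K_{g_0}\times\{0\}$, and hence transverse to $\dot N^*\Sigma \subset T^*M$ there. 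Thom's parametric transversality theorem then yields a residual, hence dense, set of small $\mathbf{s}$ for which $(x,\xi) \mapsto F(x,\xi,\mathbf{s})$ is transverse to $\dot N^*\Sigma$ on a neighborhood of $K_{g_0}$, producing a metric $g_{\mathbf{s}} \in \mathcal G_T''$ arbitrarily $C^\infty$-close to $g_0$.

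The main obstacle will be verifying that the perturbed metric $g_{\mathbf{s}}$ stays in $\mathcal G_T'$ and that the compact set $K_{g_{\mathbf{s}}}$, which varies with $g$, remains controlled. The first point is handled by openness of $\mathcal G_T'$ in $\mathcal G$; the second follows from continuity of $p_g$ and of $\exp(\tfrac 1 2 H_{p_g})$ in $g$, which forces $K_{g_{\mathbf{s}}}$ to be contained in a small enlargement of $K_{g_0}$ on which the open transversality already established at $g_{\mathbf{s}}$ persists. Combining openness and density yields $\mathcal G_T''$ open-dense, and thus residual, in $\mathcal G_T'$; since $\mathcal G_T'$ is itself open-dense in $\mathcal G$, $\mathcal G_T''$ is residual in $\mathcal G$ as well.
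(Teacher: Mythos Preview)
Your argument is correct and reaches the same conclusion, but the route differs from the paper's. You proceed by a direct compactness argument: reduce to a compact shell $K_{g_0}\subset \dot N^*\Sigma$, cover it by finitely many patches $V_k$ on each of which Proposition~\ref{prop: conformal perturbation} supplies a $(2n{-}1)$-parameter conformal family, superimpose these into a single finite-dimensional family $g_{\mathbf s}$, and invoke Thom's parametric transversality theorem. Together with a separate openness argument this yields that $\mathcal G_T''$ is open and dense in $\mathcal G_T'$. The paper instead proves an abstract Baire-space lemma (Lemma~\ref{lem: machinery}) which, given only a \emph{pointwise} supply of perturbation families $\beta(b,s)$, outputs residuality directly; the paper also freezes the target as $\dot N^*\Sigma\cap p_{g_0}^{-1}[0,T/2]$ so that it does not move with $g$. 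Your approach is more elementary and actually gives a slightly sharper conclusion (open-dense rather than merely residual); the paper's machinery lemma is more reusable and sidesteps the need to check that superimposing the various $f_{s^{(k)}}$ does not spoil the rank computation. Two small points worth tightening in your write-up: (i) the phrase ``the partial derivative of $F$ in the $s^{(k)}$-direction \ldots\ is surjective onto $T_{F(x_0,\xi_0,0)}T^*M$'' is not literally true, since that partial has rank at most $2n{-}1$; what you mean (and what Proposition~\ref{prop: conformal perturbation} gives) is that the $s^{(k)}$-partial \emph{together with} the radial $\tau$-direction already contained in $T_{(x_0,\xi_0)}\dot N^*\Sigma$ is surjective, hence $D_{(x,\xi,\mathbf s)}F$ is surjective; (ii) when you superimpose, you should note that at $\mathbf s=0$ the $s^{(k)}$-partial of the combined family agrees with that of the single family from Proposition~\ref{prop: conformal perturbation} simply because all other $f_{s^{(j)}}$ vanish there, so no interference occurs at first order.
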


Before we prove this last proposition, we first see how it implies Theorem \ref{thm: main}.

\begin{proof}[Proof of Theorem \ref{thm: main}]
    The countable intersection
    \[
        \bigcap_{N \in \N} \mathcal G_N''
    \]
    is also residual in $\mathcal G$. Furthermore, for each metric in the intersection,
    \[
        \exp(\tfrac 1 2 H_{p_g}) : \dot N^*\Sigma \to T^*M \text{ is transversal to } \dot N^*\Sigma,
    \]
    as desired.
\end{proof}
}

To prove Proposition \ref{prop: almost there}, we require a lemma that will take the perturbations we have established in Section \ref{sec: perturbation} and produce the desired residual set of metrics. We defer the proof until the end of this section.

\begin{lemma}\label{lem: machinery} Let $B$ be a Baire space, $X$ and $Y$ smooth manifolds, and $Z \subset Y$ an embedded submanifold. Let
\[
	\Phi : B \to C^\infty(X,Y)
\]
be a continuous map. Suppose for each $(b_0,x_0) \in B \times X$ for which $\Phi(b)$ is non-transversal to $Z$ at $x_0$, there exists a neighborhood $S$ of the origin in $\R^n$, an open neighborhood $U$ of $b_0$, and a continuous map $\beta : U \times S \to B$ such that:
\begin{enumerate}
	\item $\beta(b,0) = b$ for each $b \in U$.
	\item If $\Psi(b,s,x) = \Phi(\beta(b,s))x$, then $b \mapsto \Psi(b,\cdot, \cdot)$ is a continuous mapping $U \to C^\infty(S \times X, Y)$.
	\item $(s,x) \mapsto \Psi(b_0,s,x)$ is transversal to $Z$ at $(0,x_0)$.
\end{enumerate}
Then, $\{ b \in B : \Phi(b) \text{ is transversal to } Z \}$ is residual in $B$.
\end{lemma}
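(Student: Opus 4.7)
The plan is to realize the set $\{b \in B : \Phi(b) \text{ transversal to } Z\}$ as a countable intersection of open dense subsets of $B$ and invoke the Baire property. Since $X$ is a smooth manifold it is $\sigma$-compact; write $X = \bigcup_n K_n$ with each $K_n$ compact, and set
\[
    B_n := \{ b \in B : \Phi(b) \text{ is transversal to } Z \text{ at every } x \in K_n\}.
\]
The desired set equals $\bigcap_n B_n$, so it suffices to show each $B_n$ is open and dense. Openness follows from continuity of $\Phi$ and compactness of $K_n$: non-transversality is a closed condition on the $1$-jet of $\Phi(b)$ at $x$ (after possibly localizing in $Y$, since $Z$ is embedded hence locally closed), and compactness of $K_n$ makes the projection $B \times K_n \to B$ closed.

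For density, I fix $b_0 \in B$ and an open neighborhood $V \subset B$, and aim to produce $b \in V \cap B_n$. I first cover $K_n$ by finitely many open sets $W_1, \ldots, W_p, W_1', \ldots, W_m'$ of two types. Each type-(a) set $W_i$ is a neighborhood of a transversal point of $\Phi(b_0)$ on which $\Phi(b_0)$ is transversal throughout $\overline{W_i}$; by openness of transversality over compact sets, this persists in some $B$-neighborhood of $b_0$. Each type-(b) set $W_j'$ arises by applying the hypothesis at a non-transversal point $x_j \in K_n$ of $\Phi(b_0)$: it comes with $S_j \subset \R^{n_j}$, a neighborhood $U_j \ni b_0$, a map $\beta_j : U_j \times S_j \to B$, and $\Psi_j(b, s, x) = \Phi(\beta_j(b, s))(x)$ with $\Psi_j(b_0, \cdot, \cdot)$ transversal to $Z$ at $(0, x_j)$, hence on an open neighborhood $S_j' \times W_j'$ of $(0, x_j)$ by openness of transversality. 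Using hypothesis (2), I shrink to a single open neighborhood $U \subset V \cap \bigcap_j U_j$ of $b_0$ on which all the preceding transversality statements hold uniformly for $b \in U$.

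I then build a sequence $b^{(0)} := b_0, b^{(1)}, \ldots, b^{(m)} \in U$ such that $\Phi(b^{(j)})$ is transversal on $\overline{W_1'} \cup \cdots \cup \overline{W_j'}$ (while transversality on each $\overline{W_i}$ is automatic since $b^{(j)} \in U$). At step $j \geq 1$, the smooth family $\Psi_j(b^{(j-1)}, \cdot, \cdot)$ is transversal to $Z$ on $S_j' \times W_j'$, so $\Psi_j(b^{(j-1)}, \cdot, \cdot)^{-1}(Z) \cap (S_j' \times W_j')$ is a smooth submanifold; by Sard's theorem, the projection onto $S_j'$ has a residual set of regular values, each yielding an $s$ for which $x \mapsto \Phi(\beta_j(b^{(j-1)}, s))(x)$ is transversal to $Z$ on $W_j'$. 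I pick $s_j$ in this Sard-good set, also small enough (by continuity of $\beta_j$) that $b^{(j)} := \beta_j(b^{(j-1)}, s_j) \in U$ and that transversality on the compact set $\overline{W_1'} \cup \cdots \cup \overline{W_{j-1}'}$ fixed by prior steps is preserved. Both requirements are open in $s_j$ near $0$ while the Sard-good set is dense there, so a valid $s_j$ exists arbitrarily close to $0$. After $m$ iterations, $\Phi(b^{(m)})$ is transversal over $K_n$, giving $b^{(m)} \in V \cap B_n$.

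The main obstacle is the inductive balancing inside the density step: each perturbation must fix transversality on a new region $W_j'$ via parametric transversality, yet remain small enough in $B$ to preserve transversality already gained on compact regions. This is resolved by combining the uniform continuity (on the shrunken $U$) of $\Psi_j$ in $b$ from hypothesis (2), which keeps the Sard machinery available throughout the iteration, with the density of Sard-regular parameters near $0$. A lesser technical point, that $Z$ need not be closed in $Y$, is handled by localizing $Y$ to open sets where $Z$ restricts to a closed submanifold.
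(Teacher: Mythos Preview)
Your proof is correct and shares the essential ingredients with the paper's---reduce to transversality over compact pieces of $X$, then use the hypothesis together with Thom's parametric transversality theorem---but the organization is genuinely different. The paper first isolates a clean preliminary lemma: if $E \subset B \times X$ is closed and for each $(b_0,x_0) \in E$ there exist a neighborhood $U$ of $b_0$ and a compact neighborhood $K$ of $x_0$ with $\{b \in U : (b,x) \in E \text{ for some } x \in K\}$ nowhere dense in $U$, then the projection of $E$ to $B$ is meager. With this in hand the main argument only has to verify the local nowhere-denseness \emph{once}, using a single application of $\beta$ and Thom transversality near each bad point; all the globalization (finite covers, countable exhaustion) is absorbed into the preliminary lemma, and no iteration is needed. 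You instead carry out the globalization by hand: cover $K_n$ by finitely many patches of two types and then iterate through the type-(b) patches, composing perturbations $\beta_1,\ldots,\beta_m$ and at each step balancing a fresh Sard choice against preservation of previously-achieved transversality on compact sets. The paper's route is more modular---the preliminary lemma is reusable and sidesteps the inductive bookkeeping you flagged as the main obstacle---while yours is more direct and self-contained. One small point to tighten: when you invoke Thom at step $j$ you want transversality on $\overline{W_j'}$, not just $W_j'$, so you should choose $W_j'$ with compact closure inside a slightly larger open set on which $\Psi_j(b,\cdot,\cdot)$ is already transversal for all $b \in U$.
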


{
We are now ready to prove Proposition \ref{prop: almost there}.

\begin{proof}[Proof of Proposition \ref{prop: almost there}] 
    Fix $g_0 \in \mathcal G_T'$ and consider the open neighborhood of metrics
    \[
        \mathcal G_T'(g_0) := \{g \in \mathcal G_T' : \tfrac 1 2 p_{g_0} < p_g < 2p_{g_0} \}.
    \]
    Since $\mathcal G_T'(g_0)$ is an open subset of $\mathcal G_T'$, which is itself an open subset of the completely metrizable space $\mathcal G$, $\mathcal G_T'(g_0)$ is Baire, and this will be taken to be the space $B$ of the lemma. The roles of $X$, $Y$, and $Z$ of the lemma will be filled by $\dot N^*\Sigma$, $T^*M$, and $\dot N^*\Sigma \cap p_{g_0}^{-1}[0,T/2]$, respectively. The role of $\Phi : B \to C^\infty(X,Y)$ will be filled by the map
    \[
        g \mapsto \exp(\tfrac 1 2 H_{p_g}) \in C^\infty(\dot N^*\Sigma, \dot T^*M).
    \]
    Now, suppose $(g,x,\xi) \in \mathcal G_T'(g_0) \times \dot N^*\Sigma$ is a point at which this map is non-transversal to $\dot N^*\Sigma \cap p_{g_0}^{-1}[0,T/2]$. Then, we note that
    \[
        \exp(\tfrac t 2 H_{p_g}(x,\xi)) \neq (x,\xi) \qquad \text{ for each } t \in (0,1],
    \]
    and so we let $f_s$ with $s \in \R^{2n-1}$ be as in Proposition \ref{prop: conformal perturbation} and set
    \[
        \beta(g,s) = (1 + f_s)^{-1}g.
    \]
    Again since $\mathcal G_T'(g_0)$ is an open subset of $\mathcal G$ and since $f_0 = 0$, we may restrict $\beta$ to an open neighborhood $U$ of $g$ and an open neighborhood $S$ of $0$ in $\R^{2n-1}$ to realize $\beta$ as a continuous and well-defined mapping $U \times S \to \mathcal G_T'(g_0)$. Finally, by Proposition \ref{prop: conformal perturbation},
    \[
        (s,x,\xi) \mapsto \exp(\tfrac 1 2 H_{(1 + f_s)p_g})(x,\xi)
    \]
    is transversal to $\dot N^*\Sigma \cap p_{g_0}^{-1}[0,T/2]$. We conclude by Lemma \ref{lem: machinery} that the set of $g \in \mathcal G_T'(g_0)$ for which
    \[
        \exp(\tfrac 1 2 H_{p_g}) : \dot N^*\Sigma \to T^*M \text{ is transversal to } \dot N^*\Sigma \cap p_{g_0}^{-1}[0,T/2]
    \]
    is residual in $\mathcal G_T'(g_0)$. The proposition follows.
\end{proof}
}

We now prove Lemma \ref{lem: machinery}. First, we prove a preliminary lemma.

\begin{lemma}\label{lem: preliminary machinery}
    Suppose $B$ is a Baire space and $X$ a smooth manifold. Suppose $E$ is a closed subset of $B \times X$, and that for each $(b_0, x_0) \in E$, there exists an open neighborhood $U$ of $b_0$ and a compact neighborhood $K$ of $x_0$ for which
    \[
        \{ b \in U : (b,x) \in E \text{ for some } x \in K\}
    \]
    is nowhere dense in $U$. Then,
    \[
        \{ b \in B : (b,x) \in E \text{ for some } x \in X \}
    \]
    is meager in $B$.
\end{lemma}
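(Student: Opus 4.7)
The plan is to reduce the global statement to a countable union of closed nowhere-dense sets by covering $X$ with compacts and applying the local hypothesis together with the tube lemma. First, I would use second countability of the manifold $X$ to write $X = \bigcup_n K_n$ as a countable union of compact sets (for instance, the closures of a countable base of precompact coordinate balls). Setting $F_n = \pi_B\bigl(E \cap (B \times K_n)\bigr)$, each $F_n$ is closed in $B$ because $E$ is closed and projection along a compact factor is a closed map. Since
\[
    \{b \in B : (b,x) \in E \text{ for some } x \in X\} = \bigcup_n F_n,
\]
it suffices to show each $F_n$ is nowhere dense; as $F_n$ is already closed, this amounts to verifying it has empty interior.

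Next, I would fix $n$ and argue by contradiction, assuming $F_n$ contains a nonempty open $V \subset B$. Choose $b_0 \in V$; then some $x_0 \in K_n$ satisfies $(b_0, x_0) \in E$. I would apply the hypothesis at \emph{every} point $x$ of the compact slice
\[
    E_{b_0} \cap K_n = \{x \in K_n : (b_0,x) \in E\}
\]
to produce, for each such $x$, a neighborhood $U_x$ of $b_0$, a compact neighborhood $K_x$ of $x$ with interior $W_x$, and a nowhere-dense set $G_x \subset U_x$ containing $\{b \in U_x : (b,x') \in E \text{ for some } x' \in K_x\}$. Compactness of $E_{b_0} \cap K_n$ then yields a finite subcover $W_{x_1},\dots,W_{x_m}$; set $U_0 = V \cap \bigcap_{j} U_{x_j}$. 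The crucial step is an application of the tube lemma to the compact set $K_n' = K_n \setminus \bigcup_j W_{x_j}$: since $\{b_0\} \times K_n' \subset E^c$ and $E^c$ is open, the tube lemma produces an open $U_0' \subset U_0$ containing $b_0$ with $U_0' \times K_n' \subset E^c$. Consequently,
\[
    F_n \cap U_0' \subset \bigcup_{j=1}^m G_{x_j},
\]
which is a finite union of nowhere-dense subsets of $U_0'$, and hence nowhere dense in $U_0'$. But $F_n \supset V \supset U_0'$, so $F_n \cap U_0' = U_0'$ is nonempty open, a contradiction.

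The main obstacle I expect is the bookkeeping in the tube-lemma step: the hypothesis only controls $E$ near one point $(b_0, x_0)$ at a time, but to constrain $F_n$ near $b_0$ one must simultaneously handle the entire compact fiber $E_{b_0} \cap K_n$. The right idea is to notice that any $x \in K_n$ not covered by the finitely many $W_{x_j}$ must lie outside $E_{b_0}$, and the tube lemma then uniformly pushes such stray points out of $E$ on a whole neighborhood of $b_0$. Once this is in place, the rest of the argument is formal Baire-category arithmetic on the finitely many nowhere-dense sets $G_{x_j}$.
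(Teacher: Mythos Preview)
Your proof is correct and follows essentially the same route as the paper: exhaust $X$ by countably many compacts, use the tube lemma to see each $F_n=\pi_B(E\cap(B\times K_n))$ is closed, and at each $b_0$ cover the compact fiber $E_{b_0}\cap K_n$ by finitely many of the special neighborhoods from the hypothesis to force nowhere-density. Your explicit tube-lemma step on the remainder $K_n'=K_n\setminus\bigcup_j W_{x_j}$ is in fact a point the paper glosses over in its displayed inclusion, so your write-up is, if anything, a bit more careful here.
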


\begin{proof}
    Our first task is to untether the dependence of $K$ from $U$. So, consider an arbitrary compact subset $K \subset X$. We note $E \cap (\{b_0\} \times K)$ is compact, and hence admits a finite cover
	\[
		U_1 \times K_1^\circ, \ \ldots, \ U_m \times K_m^\circ
	\]
	where $U_i$ and $K_i$ are picked by the lemma. Now set $U = U_1 \cap \cdots \cap U_m$, and note that $U$ remains an open neighborhood of $b_0$. At the same time,
	\begin{multline*}
		\{b \in U : (b,x) \in E \text{ for some } x \in K \} \\
		\subset \bigcup_{i = 1}^m \{b \in U : (b,x) \in E \text{ for some } x \in K_i \},
	\end{multline*}
	the right side being a finite union of nowhere dense sets, which is again nowhere dense. We have just proved: For each $b_0 \in B$ and each compact set $K \subset X$, there exists an open neighborhood $U$ of $b_0$ for which
	\[
		\{b \in U : (b,x) \in E \text{ for some } x \in K \}
	\]
    is nowhere dense in $U$. 
    
	Let $\pi : B \times K \to B$ be the projection, and note 
	\[
		\pi(E \cap (B \times K)) = \{b \in B : (b,x) \in E \text{ for some } x \in K \}
	\]
	is closed ($E$ is closed and $\pi$ is a closed mapping by the tube lemma) and also has dense complement by the argument above. It follows $\pi(E \cap (B \times K))$ is nowhere dense in $B$. Finally, we take a countable exhaustion of $X$ by compact sets $K_1, K_2, \ldots$, and note
	\[
		\pi(E) = \bigcup_j \pi(E \cap (B \times K_j))
	\]
	is the countable union of nowhere dense sets, and hence is meager.
\end{proof}

\begin{proof}[Proof of Lemma \ref{lem: machinery}]
	We claim that, for each $(b_0, x_0) \in B \times X$ for which $\Phi(b_0)$ is non-transversal to $Z$ at $x_0$, there exists an open neighborhood $U$ of $b_0$ and a compact neighborhood $K$ of $x_0$ in $X$ for which
	\[
		\{b \in U : \Phi(b) \text{ is non-transversal to $Z$ at some $x \in K$} \}
	\]
	is nowhere dense in $U$. We will be done by Lemma \ref{lem: preliminary machinery}.
    For bookkeeping purposes, let $E$ be the exceptional set of points $(b,x)$ for which $\Phi(b) : X \to Y$ is non-transversal to $Z$ at $x$.
    
	Fix $(b_0, x_0) \in E$ and let $S$, $U$, and $\beta$ be as in the statement. By (3), $(s,x) \mapsto \Psi(b_0, s, x)$ is transversal to $Z$ at $(0,x_0)$. There exist compact neighborhoods $S'$ of the origin in $S$ and $K$ of $x_0$ in $X$ so that $\Psi(b_0, \cdot, \cdot) : S' \times K \to Y$ is transversal to $Z$ on $S' \times K$. By (2) and the tube lemma,
	\[
		\{b \in U : \text{$\Psi(b, \cdot, \cdot)$ is non-transversal to $Z$ on $S' \times K$} \}
	\]
	is closed, and does not contain $b_0$. Hence, after perhaps shrinking $U$, we ensure that $\Psi(b, \cdot, \cdot)$ is transversal to $Z$ on $S' \times X$ for each $b \in U$. We will prove our claim for these choices of $U$ and $K$. First, we note
	\[
		\{b \in U : \Phi(b) \text{ is non-transversal to $Z$ at some $x \in K$}\} = \pi(E \cap (U \times K))
	\]
	and hence is closed. We need only show its complement is dense. To this end, fix $b$ in the set. Then, by the Thom transversality theorem, the set
	\[
		\{s \in S' : \Psi(b, s, \cdot) = \Phi(\beta(b,s)) \text{ is transversal to $Z$ on $K$} \}
	\]
	is dense in $S'$. In particular, by (1) and the continuity of $\beta$, every open neighborhood of $b$ contains $\beta(b,s)$ for some element $s$ from this set. At the same time,
	\[
		\beta(b,s) \not\in \pi(E \cap (U \times K)).
	\]
	This concludes the proof of the claim, and the theorem.
\end{proof}

\section{Further Problems and Open Questions}\label{sec: further problems}

In this paper, we have demonstrated that for a residual set of metrics, the improved Kuznecov remainders hold. We prove this by restricting our attention to an isometry class of a bumpy metric. Nevertheless, several intriguing questions remain open. We now describe some further problems that naturally arise from our work.

\begin{enumerate}
    \item \textbf{Generic metrics in a conformal class.}  
    In our analysis the genericity results were obtained essentially by starting with a bumpy metric. A natural question is whether similar genericity results hold when one restricts to a conformal class. In other words, given a conformal class $[g]$ on $M$, is there a residual subset of metrics in $[g]$ for which the improved spectral asymptotics and the absence of closed conormal geodesics hold? Since conformal deformations modify the metric by a scalar function, they affect the principal symbol in a controlled way; however, the impact on the associated geodesic flow may be more subtle. New techniques may be required to study these deformations and to play the role of the bumpy metric theorem to obtain a corresponding genericity result.

   \item \textbf{Generic embeddings for a fixed bumpy metric.}   In our proof we begin with a bumpy metric and perturb the embedding so that no periodic geodesic meets the submanifold orthogonally.  We subsequently apply a conformal perturbation of the metric to achieve full transversality and thus obtain improved Kuznecov remainders.  A natural question is whether that last step is needed: for a fixed bumpy metric on $M$, do generic embeddings $\sigma\colon\Sigma\to M$ already guarantee an improved remainder in the Kuznecov formula?  The bumpy condition itself is essential—on the standard $d$-sphere, \emph{no} embedding of any submanifold can improve the Kuznecov remainder.

  \item \textbf{More general classes of metrics and Hamiltonian systems.}  
A natural question is whether our genericity results (or even the basic Kuznecov formula) extend beyond Riemannian metrics. For example, one may study Finsler metrics, which allow non-quadratic norms, or more general Hamiltonian systems (e.g., with a potential term or magnetic field). Although Kuznecov-type formulas are well established for the Riemannian Laplacian, it remains open whether analogous formulas and eigenfunction estimates hold in these broader settings. Developing the necessary spectral theory in these contexts is a challenging open problem.

\end{enumerate}

We hope that addressing these questions will lead to a deeper understanding of the interplay between geometry, dynamics, and spectral theory.

\bibliography{references}{} 
\bibliographystyle{alpha}

\end{document}